\documentclass{birkau}

\usepackage{amsmath,amssymb,url}
\usepackage [all]{xy}
\usepackage{mathrsfs}
\usepackage{mathtools}
\usepackage{stmaryrd}
\usepackage{lmodern}

\usepackage{graphicx}
\usepackage{tikz}
\usepackage[table]{xcolor}
\usepackage{booktabs}

\numberwithin{equation}{section}

\theoremstyle{plain}
\newtheorem{theorem}{Theorem}[section]
\newtheorem{lemma}[theorem]{Lemma}
\newtheorem{proposition}[theorem]{Proposition}
\newtheorem{corollary}[theorem]{Corollary}

\theoremstyle{definition}
\newtheorem{definition}[theorem]{Definition}

\newtheorem{example}[theorem]{Example}

\DeclareMathOperator{\Aut}{Aut}
\DeclareMathOperator{\PSpec}{PSpec}
\DeclareMathOperator{\Prim}{Prim}
\DeclareMathOperator{\Clo}{Clo}
\DeclareMathOperator{\nl}{nl}
\DeclareMathOperator{\Fix}{Fix}

\newcommand{\id}{\mathrm{id}}
\newcommand{\Nat}{\mathbb{N}}
\newcommand{\A}{\mathcal{A}}
\newcommand{\B}{\mathcal{B}}

\newcommand{\midd}{\; \middle| \;}

\usepackage{hyperref}

\begin{document}


\title[Probabilistic spectrum, primality and approximation]{Probabilistic equational spectrum, primality and approximation in finite algebras}

\author[C. Card\'o]{Carles Card\'o}
\address{Departament de Medicina, Àrea d’Estadística, Salut Pública i Epidemiologia \\Universitat Internacional de Catalunya, c/ Josep Trueta s/n,  \\Sant Cugat del Vallès,  08195, Spain. }
\email{ccardo@uic.es}



\subjclass{08A30, 08B15, 03C13}

\keywords{Primal algebras, idemprimal algebras, automorphism-primal algebras, equational probability, probabilistic spectrum, functional approximation}

\begin{abstract}
We define the probability of an equation in a finite algebra as the proportion of tuples in its domain that satisfy it.
We call the probabilistic spectrum of an algebra the set of probability values obtained when the equation varies. We study fundamental properties of this spectrum, such as density and limit points, and show that its structure is related to several notions of primality of an algebra. We introduce a quantitative measure of primality $\Prim(\A)\in[0,1]$ that characterizes the functional approximation capacity. We show that the degree of primality is related to the size of the spectrum. We also prove that all non-primal two-element algebras satisfy the universal bound $\Prim(\A)\le 1/2$.
\end{abstract}
\maketitle


\section{Introduction}
 
In the 1960s, in a series of papers \cite{ET1, ET2, ET3, ET4, ET6, ET7}, P. Erd\H{o}s and P. Turán introduced probabilistic and asymptotic counting methods into group theory. Only a few years later, a single result consolidated probabilistic group theory. W.H. Gustafson \cite{Gustafson1973} proved that the probability that two randomly chosen elements of a finite group $G$ commute is equal to $k(G)/|G|$, where $k(G)$ denotes the number of conjugacy classes of $G$; see also \cite{MacHale1974}. A well-known consequence is that if the group is non-abelian, then this probability cannot exceed $5/8$.

There are several directions for extending this idea. One is to consider infinite groups. To equip an infinite group with a probability distribution, it is necessary to introduce a measure, typically the Haar measure, a question that Gustafson himself already considered from the outset.

Another line of generalization consists of considering the set of commuting probabilities obtained when all finite groups are taken into account and studying its properties, such as density and limit points. In this direction, K.S. Joseph \cite{Joseph1969,Joseph1977} formulated several conjectures that have since been confirmed; see \cite{Eberhard2015}. It is also possible to replace the group structure with a closely related structure, such as that of a semigroup, while maintaining the commutation property; see \cite{Ponomarenko2012}. More generally, one may consider other properties, such as the probability that a pair of elements generates the whole group; for a survey, see \cite{Dixon2002}.

It is also possible to place the problem in the framework of universal algebra. When considering more general algebras, and therefore richer signatures, other types of equations must be taken into account. 
This line of research, to the best of our knowledge still little explored, is the one we develop in this article. In this direction, the interesting situation arises when one studies the set of probability values obtained by fixing an algebra and varying the equation. We call this set the \emph{equational probabilistic spectrum} of the algebra. This approach is orthogonal to that of K. S. Joseph and other authors, who fix the commutation equation $xy\approx yx$ and vary the group in order to study the resulting probability values.

We also study the limit points and the density. We will see that the spectrum is related to several relaxed notions of primality, such as idemprimality and automorphism-primality. In particular, we introduce a quantitative measure of the primality of an algebra, $\Prim(\A)\in[0,1]$, which describes its capacity to approximate arbitrary functions. The algebras with $\Prim(\A)=1$ coincide exactly with the primal algebras. We will show that the size of the spectrum depends on the degree of primality, and we will establish connections with coding theory. Our final result establishes that non-primal two-element algebras satisfy the universal bound $\Prim(\A)\le 1/2$.

An interesting parallel problem is that of studying algebras whose spectrum is as small as possible. This question is developed in a companion work \cite{Cardo2026min} and lies outside the scope of this article.

The structure of the article is as follows. In Section~\ref{ElementaryResults}, by way of motivation, we present some elementary examples of the computation of probabilities in concrete algebras. In Section~\ref{EspectreProbabilistic} we formally introduce the probabilistic spectrum and examine its first properties. The main results are found in Sections~\ref{LimitPoints},~\ref{GrausAprox}, and \ref{MidaPrim}. Finally, in the last Section~\ref{Barrera} we present the general result concerning the approximation of Boolean functions.

We fix some conventions of notation. We write algebras with calligraphic letters $\mathcal{A}$, except when referring to well-known algebras, such as $\mathbb{Z}_p$. Unless otherwise indicated, the corresponding italic letter $A$ will denote the underlying set of the algebra $\mathcal{A}$. All algebras considered will be finite and have a finite signature.

Given a fixed signature, by a \emph{term} we mean an element of the absolutely free algebra over an infinite set of variables. Given an algebra $\A$ with signature $\sigma$, a term $t^\A$ is a function, or operation, $t^\A:A^k\longrightarrow A$ obtained by interpreting in the algebra $\A$ the term $t$ with $k$ variables of the signature $\sigma$. When there is no ambiguity, we write $t$ instead of $t^\A$. For the definitions of lattices, groups, and other common concepts in algebra, we refer to \cite{Burris2012}. The remaining notions will be introduced throughout the text.

\section{Equational probability} \label{ElementaryResults}

By an \emph{equation} we mean an ordered pair of terms $(t,t')$ in a given signature, which we write as $t\approx t'$. The number of variables of an equation is the total number of distinct variables that appear in it. Given an equation $t\approx t'$ with $k$ variables over an algebra $\A$, we write the set of its solutions as
$$\{t \approx t'\}_{\A}= \{ \vec{x} \in A^k \mid t(\vec{x})=t'(\vec{x})\}.$$

\begin{definition}
We define the \emph{probability of the equation $t\approx t'$ over the algebra $\A$} as the fraction
$$\Pr (t\approx t' \mid \A)=\frac{|\{t \approx t'\}_\A|}{|A|^k}.$$
\end{definition}

Recall that all algebras considered are finite, so the above ratio is always well defined. We may also consider polynomial equations by incorporating constants into the signature, that is, by adding nullary operations corresponding to the desired elements.

Computing probabilities is closely related to the problem of counting solutions of the equation. In general this is difficult, even for well studied structures. If we consider, for instance, the case of groups (see \cite{Roman2012} for a survey), only partial results are known. From the computational point of view, we also lack efficient general methods for computing equational probabilities. We simply observe that the classical Boolean satisfiability problem $\mathsf{SAT}$ is equivalent to deciding whether $\Pr(t\approx 1 \mid \mathbf{2})>0$, given a term $t$ of the Boolean algebra $\mathbf{2}=(\{0,1\}, \wedge, \vee, \neg, 0,1)$.

Nevertheless, we can perform some simple computations if we have some knowledge of the structure of the algebra. The following Lemma~\ref{LemmaElementary} relates probabilities with algebra homomorphisms and direct products.

\begin{lemma} \label{LemmaElementary}
Let $\A,\B$ be algebras and let $t\approx t'$ be an equation with $k$ variables. The following properties hold.
\begin{enumerate}
\item[(i)] If $f: A\longrightarrow B$ is a monomorphism of algebras,
$$\Pr( t\approx t' \mid \A) \leq \left( \frac{|B|}{|A|}\right)^k \Pr( t\approx t' \mid \B). $$

\item[(ii)] If $f: A\longrightarrow B$ is an epimorphism of algebras,
$$1-\Pr( t \approx t' \mid \A) \geq \left( \frac{|B|}{|A|}\right)^k (1-\Pr( t\approx t' \mid \B)). $$

\item[(iii)] $\Pr( t\approx t' \mid \A\times \B) = \Pr( t\approx t' \mid \A) \cdot\Pr( t\approx t' \mid \B)$.
\end{enumerate}
\end{lemma}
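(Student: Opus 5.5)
The plan is to prove all three items by counting solution sets, using that homomorphisms commute with term operations: for any homomorphism $f$ and term $t$, $f(t^\A(\vec x))=t^\B(f(\vec x))$, where $f$ acts coordinatewise on $A^k$. This is the standard fact proved by induction on the structure of $t$. In each item we compare $|\{t\approx t'\}_\A|$ with $|\{t\approx t'\}_\B|$ and then divide by $|A|^k$.

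For (i), let $f$ be injective and let $f^k:A^k\to B^k$ be the coordinatewise map, which is also injective. Take $\vec x\in\{t\approx t'\}_\A$. Then
\[
t^\B(f^k\vec x)=f(t^\A\vec x)=f(t'^\A\vec x)=t'^\B(f^k\vec x),
\]
so $f^k$ maps $\{t\approx t'\}_\A$ injectively into $\{t\approx t'\}_\B$. Hence $|\{t\approx t'\}_\A|\le|\{t\approx t'\}_\B|$. Dividing by $|A|^k$ and writing $|\{t\approx t'\}_\B|=|B|^k\Pr(t\approx t'\mid\B)$ gives the inequality.

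For (ii), let $f$ be surjective and count the non-solutions instead. If $\vec x\in A^k$ satisfies $t^\B(f^k\vec x)\ne t'^\B(f^k\vec x)$, then $f(t^\A\vec x)\ne f(t'^\A\vec x)$, so $t^\A\vec x\ne t'^\A\vec x$. Thus $(f^k)^{-1}$ of the complement of $\{t\approx t'\}_\B$ lies inside the complement of $\{t\approx t'\}_\A$. Since $f^k$ is surjective, every $\vec y\in B^k$ has a nonempty fibre, so this preimage has at least $|B^k\setminus\{t\approx t'\}_\B|$ elements. Hence
\[
|A|^k\bigl(1-\Pr(t\approx t'\mid\A)\bigr)\ge|B|^k\bigl(1-\Pr(t\approx t'\mid\B)\bigr),
\]
which rearranges to the stated bound. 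This step needs care: only surjectivity is used, via the injective choice of one preimage per non-solution, and it is this that produces the factor $(|B|/|A|)^k$ with $|B|\le|A|$.

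For (iii), term operations in $\A\times\B$ are computed componentwise. So under the identification $(A\times B)^k\cong A^k\times B^k$, we get $\{t\approx t'\}_{\A\times\B}=\{t\approx t'\}_\A\times\{t\approx t'\}_\B$. Taking cardinalities and dividing by $|A|^k|B|^k$ gives the product formula.

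The only real obstacle is the bookkeeping in (ii), namely ensuring the counting is over non-solutions. The rest is routine.
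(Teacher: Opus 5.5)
Your proposal is correct and follows the paper's proof essentially step for step. Part (i) maps solutions injectively into solutions, part (ii) pulls back non-solutions along the surjection and counts them, and part (iii) identifies the solution set of the product with the product of the solution sets; your explicit choice of one preimage per non-solution in (ii) just spells out a counting step the paper states briefly.
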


\begin{proof}
\begin{enumerate}
\item[(i)] Since $f$ is a homomorphism, if $t(x_1, \ldots, x_k)=t'(x_1, \ldots, x_k)$ in $\A$, then $t(f(x_1), \ldots, f(x_k))=t'(f(x_1), \ldots, f(x_k))$ in $f(A)$. Since $f$ is injective,
$$|\{t\approx t'\}_\A| =|\{t\approx t'\}_{f(\A)}| \leq |\{t\approx t'\}_\B|.$$

Dividing by $|A|^k|B|^k$ we obtain
$$ \frac{|\{t\approx t'\}_\A|}{|A|^k|B|^k} \leq \frac{|\{t\approx t'\}_\B|}{|A|^k|B|^k}.$$
That is,
$$\frac{\Pr( t\approx t' \mid \A)}{|B|^k} \leq \frac{\Pr( t\approx t' \mid \B)}{|A|^k}.$$

\item[(ii)] First note that if $(f(x_1), \ldots, f(x_k))$ does not satisfy the equation in $\B$, then $(x_1, \ldots, x_k)$ does not satisfy the equation in $\A$. Since $f$ is surjective,
$$|\{ \vec{y} \in B^k \mid t(\vec{y}) \neq t'(\vec{y})\}|\leq |\{ \vec{y} \in A^k \mid t(\vec{y}) \neq t'(\vec{y})\}|.$$
The first set is in fact $B^k \setminus \{t\approx t'\}_\B$ and the second, $A^k \setminus \{t\approx t'\}_\A$. Therefore,
$$|B|^k - |\{t\approx t'\}_\B| \leq |A|^k - |\{t\approx t'\}_\A|.$$
Dividing by $|A|^k$ and $|B|^k$ and rearranging,
$$\frac{|B|^k - |\{t\approx t'\}_\B| }{|A|^k |B|^k} \leq \frac{|A|^k - |\{t\approx t'\}_\A|}{|A|^k |B|^k}.$$
Simplifying,
$$\frac{1}{|A|^k} (1-\Pr(t\approx t' \mid \B) ) \leq \frac{1}{|B|^k} (1-\Pr(t\approx t' \mid \A) ).$$

\item[(iii)] We simply observe that the sets $\{t\approx t'\}_{\A \times \B}$ and $\{t\approx t'\}_\A \times \{ t\approx t'\}_{\B}$ are bijective since $t^{\A\times \B}=(t^\A,t^\B)$.
\end{enumerate}
\end{proof}

Some of these bounds can be refined if we know more about the algebras or about the homomorphisms. For instance, it is not difficult to prove that if an epimorphism $f: \A \longrightarrow \B$ satisfies that the number $\kappa(f)=|f^{-1}(x)|$ is constant and does not depend on the element $x \in B$, then
$$\Pr(t\approx t' \mid \A) \leq \Pr(t\approx t' \mid \B)\leq \kappa(f)\Pr(t\approx t' \mid \A).$$
This is always the case for groups, where $\kappa (f)=|\ker (f)|$.

Let us consider some elementary applications of Lemma~\ref{LemmaElementary} in the case of lattices. We will use as an example the equation $x\wedge y\approx 0$, although the reader can repeat the computations with any other equation.

\begin{example}
We show that the probability that two sets are disjoint is always less than or equal to $3/4$. Let $U$ be a set with $m\geq 1$ elements. The algebra of subsets of $U$ is isomorphic to the direct product of $m$ copies of the two-element Boolean algebra $\mathbf{2}=(\{0,1\}, 0,1,\neg,\wedge, \vee)$. Since in $\mathbf{2}$ there are four possible pairs and three satisfy $x\wedge y=0$, by Lemma~\ref{LemmaElementary}(iii) we have
$$\Pr( x\wedge y\approx 0 \mid \mathbf{2}^m) = (\Pr( x\wedge y\approx 0 \mid \mathbf{2}))^m=\left( \frac{3}{4} \right)^m\leq \frac{3}{4}.$$
\end{example}

\begin{example}
Every non-modular lattice contains a copy of the pentagon lattice $\mathcal{N}_5$; see, for example, \cite{Burris2012}. Therefore, by Lemma~\ref{LemmaElementary}(i), for any equation over a non-modular lattice $\mathcal{L}$ with $n$ elements,
$$\Pr(t\approx t'\mid \mathcal{N}_5)\leq \left( \frac{n}{5}\right)^k \Pr(t\approx t' \mid \mathcal{L}).$$
In particular, for $n\geq 5$ and for the equation $x\wedge y \approx 0$, we can compute explicitly $\Pr(x\wedge y \approx 0 \mid \mathcal{N}_5)=14/25$ and obtain
$$\Pr(x\wedge y \approx 0 \mid \mathcal{L}) \geq \frac{14}{n^2}.$$
\end{example}

\begin{example}
Every distributive lattice $\mathcal{L}$ generated by at most $s$ generators is a quotient of the free lattice $\mathcal{FL}(s)$. Therefore, by Lemma~\ref{LemmaElementary}(ii), if $\mathcal{L}$ has $n$ elements, where necessarily $n\leq |\mathcal{FL}(s)|$,
$$\Pr(t\approx t'\mid \mathcal{L})\geq 1-\left( \frac{|\mathcal{FL}(s)|}{n}\right)^k (1- \Pr(t\approx t' \mid \mathcal{FL}(s))).$$
In particular, for two generators we have $|\mathcal{FL}(2)|=6$, and therefore $n\leq 6$. A direct inspection of the Hasse diagram (for instance in \cite{Jakel2023}) shows that $\Pr(x\wedge y\approx 0 \mid \mathcal{FL}(2))=13/36$. Hence,
$$\Pr(x\wedge y\approx 0 \mid \mathcal{L}) \geq 1-\frac{36}{n^2}(1-\frac{13}{36})=1-\frac{23}{n^2}.$$
However, in order for the bound to be nontrivial, it is necessary that $n\geq 5$, otherwise $1-\frac{23}{n^2}<0$. Therefore, for $n=5$,
$$\Pr(x\wedge y\approx 0 \mid \mathcal{L}) \geq \frac{2}{25}=0.08.$$
\end{example}

\section{The equational probabilistic spectrum of an algebra} \label{EspectreProbabilistic}

From now on, we focus on the study of the probabilistic spectrum: we fix an algebra and consider the probability values obtained when its equations vary. More formally,

\begin{definition}
We define the \emph{equational probabilistic spectrum of the algebra $\A$}, or more concisely, the \emph{spectrum of $\A$}, as
$$\PSpec(\A)=\{ \Pr (t\approx t' \mid \A) \mid  \mbox{all equations } t\approx t' \mbox{ in its signature}\}.$$
\end{definition}

Let us see some immediate properties. This set always contains at least the values $1$ and $1/|\A|$, since we may always consider the equations $x\approx x$ and $x\approx y$.
Naturally, two isomorphic algebras yield the same spectrum, but it is also easy to see that two anti-isomorphic groupoids have the same spectrum. Indeed, given a term $t$ of a groupoid, we can define its anti-term $\bar{t}$ by reversing the order of the operands. If $\bar{\A}$ is a groupoid anti-isomorphic to $\A$, then $\Pr(t\approx t'\mid \A)=\Pr(\bar{t}\approx \bar{t}'\mid \bar{\A})$. Since there is a natural bijection between the equations of $\mathcal A$ and those of $\bar{\mathcal A}$, we have $\PSpec(\A)=\PSpec(\bar{\A})$.

Another immediate property is that, by Lemma~\ref{LemmaElementary}(iii),
$$\PSpec(\A \times \B) \subseteq\PSpec(\A)\cdot \PSpec(\B),$$
where the product is pointwise, that is $X\cdot Y=\{xy \mid x\in X, y \in Y\}$.
For a power, we have a more precise result:
$$\PSpec(\A^m)=\{\alpha^m \mid \alpha \in \PSpec(\A)\},$$
since $\alpha \in \PSpec(\A)$ if and only if $\alpha^m \in \PSpec(\A^m)$. 
This means, for example, that if there are algebras whose spectrum is not dense in the interval $[0,1]$, as we will see later, then their powers are not dense either.

The following question can be answered easily in the case of groupoids (or of any algebra $\A$ with a single non-nullary operation): when does $0$ belong to the spectrum of $\A$? Equivalently, when does an algebra have at least one equation without solutions?
We have that $0\in \PSpec(\mathcal A)$ if and only if $\mathcal A$ has no idempotent element, in the case of a single non-nullary operation.
Recall that an element $x \in A$ is said to be idempotent for the operation $f$ when $f(x,\ldots, x)=x$. In fact, if $\A$ has a single non-nullary operation with $i$ idempotent elements and $t\approx t'$ is an equation with $k$ variables,
$$\Pr(t\approx t' \mid \A) \geq \frac{i}{|\A|^k}.$$

An algebra $\A$ is said to be \emph{derived from $\B$} if both have the same universe and the operations of $\A$ are terms of $\B$. Since any equation of $\A$ can be expressed as an equation of $\B$, we have
$$\PSpec (\A) \subseteq \PSpec (\B).$$
The same inclusion also holds if $\A$ is a reduct of $\B$, that is, if $\A$ is obtained by removing some operations from the signature of $\B$.

Finally, one more property that can be verified immediately. Let $\Clo(\A)$ denote the clone generated by the operations of the algebra $\A$. If $\Clo (\A) \subseteq \Clo (\B)$, then $\PSpec (\A) \subseteq \PSpec (\B)$. Indeed, if $t$ is a term of $\A$, then there exists a term $w$ of $\B$ with the same interpretation, $w^{\B}=t^{\A}$. Therefore, for each probability value $\alpha=\Pr(t\approx t' \mid \A)$ there exist terms $w,w'$ such that
$\Pr(w\approx w' \mid \B)=\alpha$.

In general, computing the spectrum of an algebra is not easy. Let us, however, show some simple examples.

\begin{example}
Let $\mathcal{P}_n=(\{1, \ldots, n\}, *)$ with the projection operation $x*y=x$. Any term $t$ reduces to the first variable that appears on the left in its representation, $t(x_1, \ldots, x_k)=x_1$. Therefore,  there are essentially only two distinct equations in $\mathcal{P}_n$, namely $x\approx y$ and $x\approx x$, which yield only two probability values,
$$\PSpec(\mathcal{P}_n)=\left\{ \frac{1}{n},1 \right\}.$$
\end{example}

\begin{example} \label{PSpecBoole}
Let $\mathbf{2}=(\{0,1\}, 0,1,\neg, \vee, \wedge)$ be the two-element Boolean algebra. By the functional completeness of this algebra, we know that any function $f: \mathbf{2}^k \longrightarrow \mathbf{2}$ can be written as a combination of its basic operations. Consequently, any subset of $\mathbf{2}^k$ can be written as the set of solutions of an equation in $k$ variables of the form $f(x_1, \ldots, x_k)\approx 0$. Thus, for any integer $s$ with $0\leq s \leq 2^k$ there exists a Boolean function $f: \mathbf{2}^k \longrightarrow \mathbf{2}$ such that $s=|f^{-1}(0)|$ and therefore
$$ \PSpec(\mathbf{2}) =
\left\{\, \frac{s}{2^k} \midd 0 \le s \le 2^k,\; k \ge 0 \,\right\}.$$
That is, the spectrum of $\mathbf{2}$ is the set of dyadic numbers in the interval $[0,1]$, which is dense.
\end{example}

\begin{example} \label{ExZp}
Let $\mathbb{Z}_p=(\{0,1,\ldots, p-1\},+, -(\cdot), 0)$ be the cyclic group of prime order $p$, where $-(\cdot)$ denotes the unary operation $-(x)=-x$. Let us show that
$$\PSpec(\mathbb{Z}_p)=\left\{ \frac{1}{p},1 \right\}.$$
Every equation $t\approx t'$ holds if and only if $t-t'\approx 0$, and therefore every equation can be written in the form $a_1x_1+\cdots +a_kx_k= 0$. Suppose that at least one of the coefficients is nonzero; otherwise, we would have $t=t'$ and hence $\Pr(t\approx t' \mid \mathbb{Z}_p)=1$. Without loss of generality, suppose that this coefficient is $a_k$. Since $p$ is prime, this equation is equivalent to
$$-a^{-1}(a_1x_1+\cdots +a_{k-1}x_{k-1})=x_k.$$
Note that although the multiplicative inverse is not part of the signature, this poses no problem, as we only use the ring structure to identify a suitable element. This equation has $p^{k-1}$ solutions, since $x_k$ is determined by the values of $x_1, \ldots, x_{k-1}$, and therefore,
$$\Pr(a_1x_1+\cdots +a_kx_k\approx 0 \mid \mathbb{Z}_p)=\frac{p^{k-1}}{p^k}=\frac{1}{p}.$$
Thus $\mathbb{Z}_p$ has the smallest possible spectrum, but it is essential that $p$ be prime.
\end{example}

The action of the automorphism group imposes a first restriction on the probabilistic spectrum of an algebra.
The solutions of an equation with $k$ variables in an algebra $\A$ form a subset of $A^k$. Consider the action
$\cdot:\Aut (\A) \times A^k \longrightarrow A^k$, defined componentwise,
$$\varphi \cdot (x_1, \ldots, x_k)=(\varphi(x_1), \ldots, \varphi(x_k)).$$
We introduce the following notation. Given $x_1, \ldots, x_n \in \Nat$,
$$\sum^\circ (x_1, \ldots, x_n)=\left\{ \sum_{i\in I} x_i \midd I \subseteq \{1,\ldots, n\} \right\}.$$
And given subsets $X_1, \ldots, X_k \subseteq \Nat$, we define
$$\ell(\{X_1, \ldots, X_k\})=(|X_1|,\ldots, |X_k|). $$
The following result gives a restriction on the spectrum based on the action of automorphisms.

\begin{theorem} \label{TeoOrbites}
For any algebra $\A$,
$$\PSpec( \A)\subseteq \bigcup_{k\geq1} \frac{1}{|A|^k}  \sum^\circ \ell ( A^k/\Aut(\A)).$$
\end{theorem}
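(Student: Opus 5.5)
The plan is to show that every solution set $\{t\approx t'\}_\A$ is a union of orbits of the componentwise action of $\Aut(\A)$ on $A^k$. The probability is then automatically a sum of orbit sizes divided by $|A|^k$.

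Fix an equation $t\approx t'$ with $k$ variables, where $k\geq 1$. Equations with no variables only arise when there are nullary symbols; such an equation can be padded with a dummy variable $x\approx x$ without changing its probability, so we may assume $k\ge 1$. The key step is that automorphisms commute with term operations: for every $\varphi\in\Aut(\A)$ and every term $t$,
$$\varphi(t^\A(x_1,\ldots,x_k))=t^\A(\varphi(x_1),\ldots,\varphi(x_k)).$$
This is proved by a routine induction on the construction of $t$. For variables it is trivial. For nullary operations it holds because a homomorphism preserves constants. For a basic operation applied to subterms it follows from the homomorphism property combined with the induction hypothesis.

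Consequently, if $\vec x\in\{t\approx t'\}_\A$, then applying $\varphi$ to the identity $t(\vec x)=t'(\vec x)$ gives $t(\varphi\cdot\vec x)=t'(\varphi\cdot\vec x)$. So the solution set is closed under the action of $\Aut(\A)$, and it is therefore a disjoint union of orbits. Write $A^k/\Aut(\A)=\{O_1,\ldots,O_n\}$. There is then an index set $I\subseteq\{1,\ldots,n\}$ with
$$|\{t\approx t'\}_\A|=\sum_{i\in I}|O_i|\in \sum^\circ \ell(A^k/\Aut(\A)).$$
Dividing by $|A|^k$ puts $\Pr(t\approx t'\mid\A)$ in the $k$-th term of the union. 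Since the equation was arbitrary, this proves the inclusion.

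There is no real obstacle here. The only point needing care is the induction showing that term operations commute with automorphisms, including nullary symbols. A minor bookkeeping issue is that $\sum^\circ$ is defined on a tuple, so orbits with equal sizes must be listed with multiplicity. This is exactly what $\ell$ does when it is applied to the list of orbits, and I will state that convention explicitly.
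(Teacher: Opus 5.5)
Your proposal is correct and follows essentially the same argument as the paper. Automorphisms commute with term operations, so the solution set is closed under the componentwise action of $\Aut(\A)$ and is therefore a union of orbits, whose sizes sum to $|\{t\approx t'\}_\A|$. Your extra care (the induction on terms, padding equations with no variables, and listing orbit sizes with multiplicity) only makes explicit what the paper leaves implicit.
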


\begin{proof}
Let $t\approx t'$ be an equation with $k$ variables. The set $A^k$ decomposes into orbits under the action
$A^k=C_1\cup \cdots \cup C_s$ where $A^k/\Aut(\A)=\{C_1, \ldots, C_s\}$. Let $\varphi$ be any automorphism. We have that $\vec{x}$ satisfies the equation if and only if $\varphi \cdot \vec{x}$ satisfies it, since automorphisms preserve terms. In other words, if one element of an orbit satisfies the equation, then the rest of the elements of the orbit also satisfy it. This implies that the set of solutions can be decomposed as a union of orbits:
$$\{t\approx t'\}_\A = C_{i_1} \cup \cdots \cup C_{i_r},$$
for some subset $I=\{i_1, \ldots, i_r\} \subseteq\{1,\ldots, s\}$. Therefore,
\[
\Pr (t\approx t' \mid \A)= \frac{|\{t\approx t'\}_\A|}{|A|^k}
= \frac{1}{|A|^k} \sum_{i\in I} | C_{i}| \,\, \in \frac{1}{|A|^k}  \sum^\circ \ell ( A^k/\Aut(\A)).\qedhere
\]
\end{proof}

The following example, which is very simple, illustrates an application of Theorem~\ref{TeoOrbites}.

\begin{example} \label{ExempleM3}
Consider the lattice $\mathcal{M}_n=(\{0,1,a_1, \ldots, a_n\}, \vee, \wedge)$; see Figure~\ref{FigOrbites}.
Consider an equation $t\approx t'$ with two variables.
On the one hand, we have that $\Aut(\mathcal{M}_n)\cong \mathfrak{S}_n$, where $\mathfrak{S}_n$ is the symmetric group of order $n!$, and that each automorphism fixes $0$ and $1$, and permutes the intermediate elements. The scheme in Figure~\ref{FigOrbites} shows the orbits. Hence,
$$\ell( M_n^2 / \Aut(\mathcal{M}_n))=(1,1,1,1,n,n,n,n,n, n^2-n).$$
Dividing by $(n+2)^2$ and applying $\sum^\circ$ we obtain that the probability $\Pr(t\approx t' \mid \mathcal{M}_n)$ must be given by a certain combination of the form
$$\alpha \frac{1}{(n+2)^2}+ \beta \frac{n}{(n+2)^2} + \gamma  \frac{n^2-n}{(n+2)^2},$$
where $ 0\leq \alpha\leq 4$, $0 \leq \beta \leq 5$, $0 \leq \gamma \leq 1 $.

For $n\leq5$, the previous expression fills all numerators $d/(n+2)^2$, $0\leq d \leq (n+2)^2$. However, for $n>5$ gaps begin to appear among the numerators. For $n=6$, one can verify manually that the probability of an equation with two variables cannot be a fraction $d/64$ with $d\equiv 5 \pmod 6$. Nevertheless, this is only a combinatorial restriction, and in fact, some other fractions are not realized as probabilities either.
\end{example}

\begin{figure}
\begin{tikzpicture}[scale=0.70] 
\tikzstyle{vertex}=[circle, draw, inner sep=0pt, minimum size=5pt]
  \node[vertex] (A) at (2,0) {};
  \node[vertex] (B) at (0,2)   {};
  \node[vertex] (C) at (1,2)   {};
  \node[vertex] (D) at (3,2)   {};
   \node[vertex] (E) at (4,2)   {};
   \node[vertex] (F) at (2,4)   {};
  \draw[thin] (A) -- (B) -- (F);
  \draw[thin] (A) -- (C) -- (F);
  \draw[thin] (A) -- (D) -- (F);
  \draw[thin] (A) -- (E) -- (F); 
  \node[align=left] at (1.5,0) {$0$};
  \node[align=left] at (-0.5,2) {$a_1$};
  \node[align=left] at (0.6,2) {$a_2$};
  \node[align=left] at (2,2) {$\cdots$};
  \node[align=left] at (4.5,2) {$a_{n}$};
  \node[align=left] at (1.5,4) {$1$};
 \end{tikzpicture}
 
{\footnotesize
\renewcommand{\arraystretch}{2.4}
\begin{tabular} {   | c | c c c | c |}
 \hline 
  $(0,0)$ & $(0,a_1)$ & $\cdots$ & $(0,a_n)$ & $(0,1)$  \\
\hline
 $(a_1,0)$ & $(a_1,a_1)$ & \cellcolor{lightgray} $\cdots$ &  \cellcolor{lightgray} $(a_1,a_n)$ & $(a_1,1)$  \\
$\vdots$ &  \cellcolor{lightgray}  $\vdots$ &  $\ddots$ &  \cellcolor{lightgray} $\vdots$ &  $\vdots$\\
 $(a_n,0)$ &  \cellcolor{lightgray} $(a_n,a_1)$ &  \cellcolor{lightgray} $\cdots$ & $(a_n,a_n)$ & $(a_n,1)$  \\
\hline
 $(1,0)$ & $(1,a_1)$ & $\cdots$ & $(1,a_n)$ & $(1,1)$  \\
 \hline 
\end{tabular}}
\caption{The lattice $\mathcal{M}_n$ at the top of the figure and below a scheme of the orbits of $M_n^2/\Aut(\mathcal{M}_n)$ from Example~\ref{ExempleM3}. The orbits are separated by lines, with the exception of the center of the table, where the gray cells form a single orbit of length $n^2-n$, whereas the elements on the diagonal form an orbit of length $n$.}\label{FigOrbites}
\end{figure}
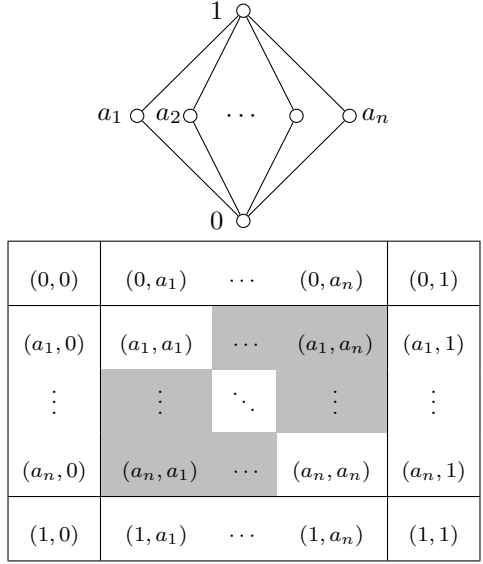

The natural question is when the inclusion of Theorem~\ref{TeoOrbites} is an equality. Given an algebra $\A$, if $t \in \Clo(\A)$, then $t(\varphi \cdot \vec{x})=\varphi (t(\vec{x}))$ for all $\varphi \in \Aut(\A)$. 
An algebra is said to be \emph{automorphism-primal} when the converse also holds; see \cite{KaarliPixley2000}. We denote by $\Fix(\A)$ the subalgebra of fixed points of $\A$, that is,
$$\Fix(\A)=\{ x\in A \mid \varphi(x)=x, \,\,\forall \varphi \in \Aut(\A)\}.$$

\begin{theorem}\label{TeoAutoPrimal}
If $\A$ is an automorphism-primal algebra such that $\Fix(\A)$ has at least two elements, then
$$\PSpec( \A)= \bigcup_{k\geq1} \frac{1}{|A|^k}  \sum^\circ \ell ( A^k/\Aut(\A)).$$
\end{theorem}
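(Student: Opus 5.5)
The inclusion $\subseteq$ is Theorem~\ref{TeoOrbites}, so the plan is to prove the reverse inclusion. Fix $k\geq 1$, let $A^k/\Aut(\A)=\{C_1,\ldots,C_s\}$, and fix a subset $I\subseteq\{1,\ldots,s\}$. We want an equation $t\approx t'$ in $k$ variables whose solution set is exactly $S=\bigcup_{i\in I} C_i$. Then $\Pr(t\approx t'\mid\A)=\frac{1}{|A|^k}\sum_{i\in I}|C_i|$, and every element of the right-hand side is obtained.

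\textbf{Step 1: the test function.} Since $\Fix(\A)$ has at least two elements, choose distinct $a,b\in\Fix(\A)$. Define $f:A^k\to A$ by $f(\vec x)=a$ if $\vec x\in S$ and $f(\vec x)=b$ otherwise. We check that $f$ commutes with every automorphism. Let $\varphi\in\Aut(\A)$. Because $S$ is a union of orbits, $\vec x\in S$ if and only if $\varphi\cdot\vec x\in S$. If $\vec x\in S$, then $f(\varphi\cdot\vec x)=a=\varphi(a)=\varphi(f(\vec x))$, using that $a$ is fixed. The case $\vec x\notin S$ is the same with $b$. Hence $f(\varphi\cdot\vec x)=\varphi(f(\vec x))$ for all $\vec x$ and $\varphi$.

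\textbf{Step 2: realise $f$ as a term.} By automorphism-primality, $f\in\Clo(\A)$, so $f=t^\A$ for some term $t$ in the variables $x_1,\ldots,x_k$.

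\textbf{Step 3: realise the constant $a$ as a term.} The constant function $c_a:A^k\to A$ with value $a$ also commutes with all automorphisms, because $a$ is fixed. So $c_a=t'^\A$ for some term $t'$. If the term produced does not visibly mention all $k$ variables, we can force it to, for instance by composing with a projection-type trick or by adjoining the variables inside a term that is independent of them. A simpler option is to note that $c_a$, viewed as a $k$-ary operation, is already in the clone as a $k$-ary term, so the equation $t\approx t'$ is read as a $k$-variable equation.

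\textbf{Step 4: conclude.} The solution set of $t\approx t'$ is $\{\vec x\mid f(\vec x)=a\}=S$, since $a\neq b$. This gives the required probability value.

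\textbf{Main obstacle.} The only delicate point is the variable count in the definition of probability: the equation must have exactly $k$ variables. If the terms mention fewer variables, the probability is computed on a smaller power, but we must make sure it still equals $|S|/|A|^k$. I would argue this as follows. If some variable $x_j$ does not occur, then $S$ is a cylinder in coordinate $j$, so $|S|/|A|^k$ equals the corresponding ratio in one fewer variable. Either way the value lies in the spectrum. Also, the case $S=\emptyset$ is covered precisely because $b\neq a$ is available, which is where the hypothesis $|\Fix(\A)|\geq 2$ is used.
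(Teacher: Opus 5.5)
Your proof is correct and is essentially the paper's argument: the indicator $f$ taking two distinct fixed values $a,b$ commutes with $\Aut(\A)$ and is therefore a term. Comparing it with the constant term $a$ (the paper's $f_{\{1,\ldots,s\}}$) realizes $\frac{1}{|A|^k}\sum_{i\in I}|C_i|$, and the inclusion $\subseteq$ is Theorem~\ref{TeoOrbites}. Your cylinder remark for terms that omit some of the $k$ variables settles a detail the paper leaves implicit, though note that the hypothesis $a\neq b$ with $b\in\Fix(\A)$ is used for every proper $S$, not only for $S=\emptyset$.
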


\begin{proof}
Let $a,b\in \Fix(\A)$ with $a\not=b$ and $|\A|=n$. Fix an arity $k$, and let $A^k/\Aut(\A)=\{C_1, \ldots, C_s\}$. Given $I \subseteq \{1,\ldots, s\}$, define the function
$$f_I(\vec{x})= \begin{cases} a &\mbox{ if } \vec{x} \in \bigcup_{i\in I} C_i,\\ b &\mbox{ otherwise.}\end{cases}$$

Now note that if $\varphi$ is any automorphism,
$$f_I(\varphi(\vec{x}))= \begin{cases} a &\mbox{ if } \varphi(\vec{x}) \in \bigcup_{i\in I} C_i,\\ b &\mbox{ otherwise.}\end{cases}$$

Note that $\varphi(C_i)=C_i$ and therefore $\varphi(\vec{x}) \in \bigcup_{i\in I} C_i$ if and only if $\vec{x} \in \bigcup_{i\in I} C_i$. Since $a$ and $b$ are fixed points,
$$f_I(\varphi(\vec{x}))=
\begin{cases} a &\mbox{ if } \vec{x} \in \bigcup_{i\in I} C_i,\\ b &\mbox{ otherwise.}\end{cases}
=
\begin{cases} \varphi(a) &\mbox{ if } \varphi(\vec{x}) \in \bigcup_{i\in I} C_i,\\ \varphi(b) &\mbox{ otherwise.}\end{cases}
=  \varphi ( f_I(\vec{x})).$$

Thus, since $\A$ is automorphism-primal, $f_I$ is a term of the algebra for every $I\subseteq \{1,\ldots, s\}$. Note also that $f_{\{1,\ldots, s\}}$ is the constant term $f_{\{1,\ldots, s\}}(\vec{x})=a$ for all $\vec{x}\in A^k$. Finally, it only remains to show that every probability value of 
$\frac{1}{n^k}  \sum^\circ \ell ( A^k/\Aut(\A))$ is realizable by an equation:
\begin{align*}
\Pr(f_I(\vec{x})\approx f_{\{1,\ldots, s\}}(\vec{x}) \mid \A)=
\frac{ |\bigcup_{i\in I} C_i |}{n^k}=
\sum_{i\in I}\frac{ |C_i|}{n^k}.
\end{align*}
\end{proof}

\section{Limit points and density} \label{LimitPoints}

Let $X\subseteq [0,1]$. A point $\alpha \in [0,1]$ is called a \emph{limit point} of $X$ if for every neighborhood $U_\alpha$ of $\alpha$, we have that
$X\cap (U_\alpha\setminus \{\alpha\})\neq\emptyset$.
We say that $X$ is dense in $[0,1]$ if every $\alpha\in[0,1]$ belongs to $X$ or else is a limit point of $X$. We begin with two examples showing the existence of limit points.

\begin{example}
Let $\mathcal{C}_2=(\{0,1\}, \cdot)$ be the semilattice, where
$0\cdot 0 = 0\cdot 1= 1\cdot 0=0$ and $1\cdot 1=1$.
By associativity, commutativity, and idempotence, any term is equivalent to a product of variables without repetition. Therefore, every equation in $\mathcal{C}_2$ is equivalent to one of the form $xy=xz$, where
$x=x_1\cdots x_r$, $y=y_1\cdots y_p$, $z=z_1\cdots z_q$, for some integers $p,q,r\geq 0$.
If $x,y,z$ are pairwise distinct variables, the solutions of $xy\approx xz$ are
$$\{xy\approx xz\}_{\mathcal{C}_2}=\{0,1\}^3 \setminus \{ (1,1,0), (1,0,1)\},$$
that is, the equation fails when $x=1$ and $y\neq z$.

Since in general $\Pr(x_1\cdots x_r \approx 1)=1/2^r$, and similarly for $y$ and $z$, we have that
\begin{align*}
\Pr(xy\approx xz\mid \mathcal{C}_2)
&=1-\Pr\Big( (x,y,z)\approx(1,1,0) \mbox{ or } (x,y,z)\approx(1,0,1)\mid \mathcal{C}_2\Big)\\
&=1-\frac{1}{2^r}\frac{1}{2^p}\left(1-\frac{1}{2^q}\right)
-\frac{1}{2^r}\frac{1}{2^q}\left(1-\frac{1}{2^p}\right)\\
&=1-\frac{2^p+2^q-2}{2^{p+q+r}}=\varphi(p,q,r).
\end{align*}

Therefore,
$$\PSpec(\mathcal{C}_2)=\left\{\varphi(p,q,r)  \mid p,q,r \geq 0\right\}.$$

It is easy to verify that if $r\geq0$ and $p,q\geq 2$, then, when fixing any pair of the three variables of $\varphi(p,q,r)$, the resulting function is strictly decreasing. This implies that $\PSpec(\mathcal{C}_2)$ contains a unique limit point, namely $0$ as $p,q,r \to \infty$. Although we have an infinite spectrum, it is not dense. For completeness, we summarize in Table~\ref{TaulaOrdre2} the known spectra of groupoids with two elements.
\end{example}

\begin{table}
\footnotesize
\centering

\begin{tabular}{c l}

\begin{tabular}{c|cc}
 &0&1\\\hline
0&0&0\\
1&0&0
\end{tabular}\quad
\begin{tabular}{c|cc}
 &0&1\\\hline
0&1&1\\
1&1&1
\end{tabular}
&
$\left\{\tfrac12,1\right\}$
\\
\midrule

\begin{tabular}{c|cc}
 &0&1\\\hline
0&0&0\\
1&1&1
\end{tabular}\quad
\begin{tabular}{c|cc}
 &0&1\\\hline
0&0&1\\
1&0&1
\end{tabular}
&
$\left\{\tfrac12,1\right\}$
\\
\midrule

\begin{tabular}{c|cc}
 &0&1\\\hline
0&0&1\\
1&1&0
\end{tabular}\quad
\begin{tabular}{c|cc}
 &0&1\\\hline
0&1&0\\
1&0&1
\end{tabular}
&
$\left\{\tfrac12,1\right\}$
\\
\midrule

\begin{tabular}{c|cc}
 &0&1\\\hline
0&0&0\\
1&0&1
\end{tabular}\quad
\begin{tabular}{c|cc}
 &0&1\\\hline
0&0&1\\
1&1&1
\end{tabular}
&
$\displaystyle \left\{1-\frac{2^p+2^q-2}{2^{p+q+r}} \midd p,q,r\ge0\right\}$
\\
\midrule

\begin{tabular}{c|cc}
 &0&1\\\hline
0&1&1\\
1&0&0
\end{tabular}\quad
\begin{tabular}{c|cc}
 &0&1\\\hline
0&1&0\\
1&1&0
\end{tabular}
&
$\left\{0,\tfrac12,1 \right\}$
\\
\midrule

\begin{tabular}{c|cc}
 &0&1\\\hline
0&1&1\\
1&0&1
\end{tabular}\quad
\begin{tabular}{c|cc}
 &0&1\\\hline
0&0&1\\
1&0&0
\end{tabular}\quad
\begin{tabular}{c|cc}
 &0&1\\\hline
0&1&0\\
1&1&1
\end{tabular}\quad
\begin{tabular}{c|cc}
 &0&1\\\hline
0&0&0\\
1&1&0
\end{tabular}
&
\textit{spectrum unknown}
\\
\midrule

\begin{tabular}{c|cc}
 &0&1\\\hline
0&1&0\\
1&0&0
\end{tabular}\quad
\begin{tabular}{c|cc}
 &0&1\\\hline
0&1&1\\
1&1&0
\end{tabular}
&$\displaystyle \left\{\frac{d}{2^k}\midd 0\le d\le 2^k,\;k\ge0\right\}$\\
\end{tabular}

\caption{Spectrum of all groupoids of order two, grouped by isomorphism and anti-isomorphism.}
\label{TaulaOrdre2}
\end{table}

\begin{example} \label{ExS3} We illustrate that computing the spectrum is, in general, nontrivial. Consider the case of the smallest non-abelian group. The total spectrum of $\mathfrak{S}_3$ is unknown. However, we can fix a family of non-trivial equations and compute their corresponding probabilities. The elements of the group can be written as
$$\mathfrak{S}_3=\{ 1, r,r^2, s, rs, r^2s\},$$
where $r$ and $s$ are the permutations $(3\,1\,2)$ and $(2\,1)$, respectively. First note that the squares in $\mathfrak{S}_3$ are the elements of the cyclic subgroup $\langle r \rangle$:
\begin{center}\renewcommand{\arraystretch}{1.2}
\begin{tabular}{ c | c c c c c c }
 $x$ &   $ 1$ & $ r$ & $r^2$ & $s$ & $rs$ & $r^2s$ \\ 
 \hline
 $x^2$ &   $ 1$ & $ r^2$ & $r$ & $1$ & $1$ & $1$ \\ 
\end{tabular}
\end{center}
We can rewrite the equation $x_1^2 \cdots x_k^2 \approx 1$ as a system of equations:
$$y_1\cdots y_k=1, \,\, y_1=x_1^2, \,\, \ldots,\,\, y_k=x_k^2.$$
The solutions of the first equation are of the form
$$\left\{\left(y_1, \ldots, y_{k-1}, (y_1 \cdots y_{k-1})^{-1}\right)\mid y_1, \ldots, y_{k-1}\in \{1,r,r^2\}\right\}.$$ 
If we only consider the first equation, we can express the last component in terms of the first $k-1$ ones. Each of the components satisfies that $y_j=x_j^2$. If $y_j=r$, then $x_j=r^2$. If $y_j=r^2$, then $x_j=r$. In contrast, if $y_j=1$, we have the possibilities $x_j \in \{1,s,rs,r^2s\}$. Therefore, in the general counting, we must multiply by four each possible $y_j$, that is, we must include a factor $4^{s_1}$, where $s_1$ is the number of times the identity permutation $1$ appears in the $k-1$ components. Moreover, we must take into account that the last component can also be equal to $y_k=1$, and when this is the case, it is necessary to multiply by four the number of solutions. However, this only occurs if the product of the first $k-1$ components is $1$, $y_1\cdots y_{k-1}=1$, and this happens if and only if the number of occurrences of the permutation $r$ in the first $k-1$ components, which we denote by $s_2$, and the number of occurrences of $r^2$, which we denote by $s_3$, satisfy that $s_2+2s_3\equiv 0\pmod 3$. This is equivalent to saying that $s_2\equiv s_3 \pmod 3$. Thus, using multinomial coefficients,
$$\Pr(x_1^2 \cdots x_k^2 \approx 1 \mid \mathfrak{S}_3)=\frac{1}{6^k}\sum_{s_1+s_2+s_3=k-1} { k-1 \choose s_1, s_2, s_3 } 4^{s_1 +\sigma(s_2,s_3)},$$
where
$$\sigma(s_2,s_3)=\begin{cases} 1 &\mbox{ if } s_2\equiv s_3 \pmod 3, \\ 0 &\mbox{otherwise.} 
 \end{cases}$$
We can eliminate the term $\sigma(s_2,s_3)$ using complex cubic roots of unity $\omega=e^{\frac{2\pi i}{3}}$. It holds that
 $$4^{\sigma(s_2,s_3)}=2+\omega^{s_2-s_3}+ \omega^{2(s_2-s_3)},$$
since
 $$1+\omega^k+\omega^{2k}=\begin{cases} 3 &\mbox{ if } k\equiv0\pmod3,\\ 0 & \mbox{ otherwise.}\end{cases}$$ 
Substituting this into the sum, using the multinomial theorem and making the corresponding simplifications, we obtain that
\begin{align*}
&\sum C 4^{s_1}\left(2+\omega^{s_2-s_3}+ \omega^{2(s_2-s_3) }\right)\\
=\,\,& 2\sum C 4^{s_1} +\sum C 4^{s_1}\omega^{s_2}(\omega^{-1})^{s_3}+\sum C4^{s_1} (\omega^2)^{s_2}(\omega^{-2})^{s_3}  \\
=\,\,& 2(4+1+1)^{k-1}+(4+\omega + \omega^{-1} )^{k-1} + (4+\omega^2+\omega^{-2})^{k-1} \\
 =\,\, & 2 \cdot 6^{k-1}+(4-1)^{k-1}+(4-1)^{k-1}=2(6^{k-1}+3^{k-1}),
\end{align*}
where $C={ k-1 \choose s_1, s_2, s_3 }$ and the sums run over $s_1+s_2+s_3=k-1$. Therefore, returning to the initial computation,
$$\Pr(x_1^2 \cdots x_k^2 \approx 1 \mid \mathfrak{S}_3) =\frac{1}{3}\left( 1+ \frac{1}{2^{k-1}}\right).$$
This tells us that the spectrum of $\mathfrak{S}_3$ is infinite and that it has a limit point at $1/3$. 
\end{example}

From Example~\ref{ExZp} we know that the spectrum of the group $\mathbb{Z}_p$ takes only two values, $1$ and $1/p$, when $p$ is prime. However, if we consider the ring $\mathbb{Z}_p$ with the two usual operations and the constants $0$ and $1$, then its spectrum is dense.
This is due to the fact that these structures, like the Boolean algebra $\mathbf{2}$, are primal; see Example~\ref{PSpecBoole}. Recall that an algebra is called \emph{primal} when any operation of positive arity can be expressed as a term of the algebra. The following theorem is a direct generalization of Example~\ref{PSpecBoole}, which we prove for completeness.

\begin{theorem} \label{TeoDensPrimal} The spectrum of any non-trivial primal algebra of order $n$ is the set of the $n$-adic numbers in the interval $[0,1]$.
\end{theorem}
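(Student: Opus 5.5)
The plan is to prove the two inclusions separately, following the model of Example~\ref{PSpecBoole}. Let $\A$ be primal with $|A|=n\geq 2$. By the set of $n$-adic numbers in $[0,1]$ we mean $\{ s/n^k \mid k\geq 0,\ 0\leq s\leq n^k\}$.

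The inclusion of $\PSpec(\A)$ into the $n$-adic numbers is immediate from the definition. An equation $t\approx t'$ with $k$ variables has probability $|\{t\approx t'\}_\A|/n^k$. The numerator is an integer between $0$ and $n^k$, so this value is $n$-adic.

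For the reverse inclusion, fix $k\geq 1$ and an integer $s$ with $0\leq s\leq n^k$. The value $k=0$ can be absorbed, since $s/n^0\in\{0,1\}$ equals $0\cdot n/n$ or $n/n$, which are already covered with $k=1$.
\begin{itemize}
\item Because $\A$ is non-trivial, choose two distinct elements $a\neq b$ in $A$.
\item Choose any subset $S\subseteq A^k$ with $|S|=s$.
\item Define $f:A^k\to A$ by $f(\vec x)=a$ if $\vec x\in S$, and $f(\vec x)=b$ otherwise.
\item Let $c_a:A^k\to A$ be the constant function with value $a$.
\end{itemize}
Both $f$ and $c_a$ have positive arity $k$, so primality gives terms $t,t'$ in the $k$ variables with $t^\A=f$ and $t'^\A=c_a$. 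Then $\{t\approx t'\}_\A=S$, and hence $\Pr(t\approx t'\mid\A)=s/n^k$.

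The one point needing care is the bookkeeping of variables. A term representing $f$ might not syntactically contain all $k$ variables; for instance, when $s=0$ the function $f$ is constant. The probability is defined using the number of distinct variables actually occurring in the equation, so we need all $k$ of them present. We enforce this by replacing $t'$ with a term that uses every variable, for example a term representing the function $(x_1,\ldots,x_k)\mapsto a$ built syntactically from all the variables. Alternatively, we can observe that dummy variables scale both numerator and denominator by the same power of $n$, so the ratio is unchanged. With that settled, the two inclusions give the equality.
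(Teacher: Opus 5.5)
Your proposal is correct and is essentially the paper's proof: you realize $s/n^k$ as the probability of an equation between a term for a two-valued function, which takes one value exactly on a chosen set of size $s$, and a term for the corresponding constant function, both of which exist by primality. Your version is slightly more careful than the paper's, which leaves the inclusion of $\PSpec(\A)$ in the $n$-adic numbers implicit and does not address which variables actually occur in the terms; your dummy-variable scaling argument settles that point, whereas your first alternative (a term for the constant function that syntactically contains all $k$ variables) would need a short justification of its own.
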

\begin{proof}
Since it is primal, fix an element $b \in A$ and consider the constant unary term $f_b(x)=b$ for all $x\in A$. On the other hand, since $\A$ is non-trivial there exists an element $a\neq b$, and given any subset $B\subseteq A^k$ the indicator function defined as
$$f_B(\vec{x})=\begin{cases} b & \mbox{ if } \vec{x} \in B, \\ a & \mbox{ otherwise}; \end{cases}$$
is a term of the algebra. Therefore, $ \Pr(f_B\approx f_b \mid \A)=\frac{|B|}{n^k}$.
Since we can choose $B$ with any desired size, we obtain all $n$-adic numbers.
\end{proof}

Although primality guarantees the density of the spectrum, the converse statement is no longer true. Recall that
$\PSpec (\A^m)= \{ \alpha^m \mid \alpha \in \PSpec(\A)\}$. It is easy to see that the algebra $\A$ has dense spectrum if and only if $\A^m$ does as well.
Thus, although the Boolean algebra $\mathbf{2}^m$ has dense spectrum, $\mathbf{2}^m$ is not primal for $m>1$.

There is a generalization of the previous Theorem~\ref{TeoDensPrimal}, with more interesting consequences. A function $f$ is \emph{idempotent} if $f(x, x, \ldots, x)=x$. An algebra is called \emph{idemprimal} if every idempotent function is a term.

\begin{theorem} \label{TheoIdemPrimal}
The spectrum of every non-trivial idemprimal algebra is dense.
\end{theorem}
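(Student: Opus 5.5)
The idea is to mimic the proof of Theorem~\ref{TeoDensPrimal}, replacing arbitrary indicator functions by idempotent ones, and to show that the resulting values already include all $n$-adic numbers in $[0,1]$ with a small correction that vanishes as $k$ grows. Let $|A|=n\ge 2$ and fix distinct $a,b\in A$. For a subset $B\subseteq A^k$ define
$$g_B(\vec{x})=\begin{cases} x_1 & \mbox{ if } \vec{x}\in\Delta_k,\\ b & \mbox{ if } \vec{x}\in B\setminus \Delta_k,\\ a & \mbox{ otherwise,}\end{cases}$$
where $\Delta_k=\{(x,\ldots,x)\mid x\in A\}$ is the diagonal. Every $g_B$ is idempotent, so by idemprimality it is a term. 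Similarly, the function $h$ which is $x_1$ on $\Delta_k$ and $b$ off the diagonal is idempotent, hence also a term.

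Next I would compute the probability of the equation $g_B\approx h$. It holds on all of $\Delta_k$, it holds on $B\setminus\Delta_k$, and it fails elsewhere because $a\ne b$. Therefore
$$\Pr(g_B\approx h\mid\A)=\frac{n+|B\setminus\Delta_k|}{n^k}.$$
Since $|B\setminus\Delta_k|$ can be any integer $s$ with $0\le s\le n^k-n$, the spectrum contains every value $(n+s)/n^k$ with $0\le s\le n^k-n$. These are exactly the $n$-adic fractions with denominator $n^k$ lying in $[n^{1-k},1]$.

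To conclude density, take $\alpha\in(0,1]$ and $\varepsilon>0$. Choose $k$ large enough that $n^{1-k}<\min(\alpha,\varepsilon)$. Then some numerator $d\in[n,n^k]$ satisfies $|d/n^k-\alpha|\le n^{-k}<\varepsilon$. For $\alpha=0$, the values $n^{1-k}$ themselves are in the spectrum and tend to $0$. This also shows that every point is a limit point (or a member) of $\PSpec(\A)$, since for large $k$ we get values different from $\alpha$ within any neighbourhood.

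The only delicate point is the unavoidable forced agreement of any two idempotent terms on the diagonal. It is handled by noting that this set has relative size $n^{1-k}\to 0$. No further obstacle is expected; idemprimality is used only to realize $g_B$ and $h$ as terms.
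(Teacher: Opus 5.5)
Your proposal is correct and follows the paper's proof essentially verbatim. It uses the same pair of idempotent terms (equal to $x_1$ on $\Delta_k$, taking values $a,b$ off the diagonal) to realize every value $(n+s)/n^k$ with $0\le s\le n^k-n$, and your explicit $\varepsilon$-argument for density, including the limit point $0$, just spells out the step the paper summarizes as ``by varying the size of $B$''.
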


\begin{proof}
Let $\A$ be an algebra of order $n$. Denote the diagonal subset of $A^k$ by $\Delta_k$. Fix an arity, and let $B$ be a subset subject only to the condition $B \subseteq A^k\setminus \Delta_k$. Let $a,b \in A$ with $a\not=b$. Define the pair of functions
$$f_b(\vec{x})=\begin{cases} 
b &\mbox{ if } \vec{x}\not \in \Delta_k, \\ 
x_1 & \mbox{ if } \vec{x}\in \Delta_k;
\end{cases} 
\qquad \quad
f_B(\vec{x})=\begin{cases} 
b &\mbox{ if } \vec{x} \in B, \\ 
a & \mbox{ if } \vec{x} \in (A^k\setminus \Delta_k)\setminus B, \\ 
x_1 & \mbox{ if } \vec{x}\in \Delta_k;
\end{cases} $$
where $\vec{x}=(x_1, \ldots, x_k)$. We have $\{ f_B\approx f_b\}_\A=\Delta_k \cup B$. Therefore,
$$\frac{1}{n^{k-1}}\leq \Pr(f_B\approx f_b\mid \A)=\frac{n+|B|}{n^k}\leq 1,$$
where the inequalities follow from the fact that $0\leq |B| \leq n^k-n$. By varying the size of $B$ within these bounds, we obtain that the set of probabilities is dense. Note that the value $0$, although it does not belong to the spectrum, appears as a limit point as the arity tends to infinity.
\end{proof}

Theorem~\ref{TheoIdemPrimal} actually tells us a much more general fact. V. L. Murski\v{\i} \cite{Murski1975} proved that “almost” all algebras with at least one operation of arity two or greater are idemprimal; for an updated proof, see \cite{Freese2022} or \cite{Bergman2012}. Here, “almost” is taken in a different probabilistic sense, not an equational one. An algebraic property is said to hold \emph{almost always} if the proportion of algebras satisfying the property among all algebras of size $n$ tends to $1$ as $n$ tends to infinity; see \cite{Freese1990}. We therefore obtain the following immediate consequence.

\begin{corollary}
Almost all algebras with at least one operation of arity two or greater have a dense spectrum.
\end{corollary}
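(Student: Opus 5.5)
The corollary follows by combining Murski\v{\i}'s theorem with Theorem~\ref{TheoIdemPrimal}; the plan is essentially a containment of classes. Fix a finite signature $\sigma$ containing at least one operation symbol of arity at least two. For each $n$, let $N_n$ be the number of algebras of signature $\sigma$ on the set $\{1,\ldots,n\}$. Let $I_n$ be the number of those that are idemprimal, and $D_n$ the number of those whose spectrum is dense in $[0,1]$.

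Murski\v{\i}'s result, as cited above, states that $I_n/N_n \to 1$ as $n\to\infty$. This is the only external input, and I take it as given.

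The key step is the inclusion of classes: every idemprimal algebra with $n\ge 2$ elements is non-trivial. By Theorem~\ref{TheoIdemPrimal}, its spectrum is therefore dense. So for $n\geq 2$ we have $I_n\le D_n\le N_n$. This gives
$$\frac{I_n}{N_n}\le \frac{D_n}{N_n}\le 1,$$
and the squeeze yields $D_n/N_n\to 1$, which is exactly the statement that almost all such algebras have dense spectrum. No property of the spectrum is needed beyond what Theorem~\ref{TheoIdemPrimal} supplies.

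The only point requiring care is bookkeeping rather than mathematics. One must check that ``almost all'' is measured in the same way in both statements. Whether we count labelled algebras on $\{1,\ldots,n\}$ or isomorphism classes, the comparison should be made in the same counting model in which Murski\v{\i}'s theorem is formulated. Density of the spectrum is an isomorphism invariant, since isomorphic algebras have the same spectrum. Hence the inclusion of classes holds in either model, and the limit transfers.

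The trivial case $n=1$ is excluded from consideration, as it does not affect the limit.
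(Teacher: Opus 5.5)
Your proposal is correct and follows the same route as the paper: the paper obtains the corollary as an immediate consequence of Murski\v{\i}'s theorem (almost all such algebras are idemprimal) combined with Theorem~\ref{TheoIdemPrimal}. Your squeeze $I_n/N_n \le D_n/N_n \le 1$ for $n\ge 2$, together with your remark that both properties are isomorphism-invariant, makes explicit the bookkeeping that the paper leaves implicit.
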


As for the automorphism-primal algebras mentioned in the previous section, it is not difficult to see that if they have two fixed points, then as a consequence of Theorem~\ref{TeoAutoPrimal} their spectrum is infinite. Moreover, both $0$ and $1$ are limit points.

\section{Degrees of primality and approximation} \label{GrausAprox}
We have seen some relaxed or relativized forms of primality, such as idempotent-primality or automorphism-primality; see \cite{KaarliPixley2000} for some other forms of primality. The probabilistic framework developed in this article suggests another variant, in this case, quantitative. We define the coincidence ratio between two functions of the same arity $f,g: A^k \longrightarrow A$ as
$$\mu(f,g)=\frac{|\{ \vec{x} \in A^k \mid f(\vec{x})=g(\vec{x})\}|}{|A|^k},$$
whenever $k\geq 1$. For convenience, we do not define the coincidence ratio for $k=0$.

Recall that $\Clo(\A)$ is the clone of functions generated by the operations of $\A$. We denote by $\Clo_k(\A)$ the functions in $\Clo(\A)$ of arity $k$. We denote by $\mathcal{F}(\A)$ the set of all finitary functions with universe $A$, and by $\mathcal{F}_k(\A)$ the subset restricted to functions of arity $k$.

\begin{definition}
For each $k\geq 1$, we define the \emph{arity-$k$ primality} of an algebra $\A$ as
$$\Prim_k(\A)=\min_{f \in \mathcal{F}_k(\A)}\,\, \max_{t \in \Clo_k(\A)} \mu(f,t).$$
We define the \emph{primality} of $\A$ as the number
$$\Prim(\A)=\inf_{k\geq 1} \Prim_k(\A).$$
\end{definition}

We observe that we only consider functions of positive arity, in coherence with the usual definition of primality. Note that the primality of an algebra $\A$ is always well defined, since $0\leq \Prim_k (\A) \leq 1$, and therefore, the infimum always exists. We have that $\Prim (\A)=1$ if and only if $\A$ is primal. On the other hand, it is easy to find algebras with zero primality. For example, for the semigroup given by the cyclic group, we have $\Prim (\mathbb{Z}_n)=0$. To prove this, it suffices to observe that the only unary term is the identity (since no nontrivial constants or translations are available). Then, $\mu(\mathrm{id}, \mathrm{id}+1)=0$, and since primality since primality is defined as the minimum over the best possible approximations, its primality is $0$.

Thus, the numerical interpretation of $\Prim(\A)$ is that algebras with primality close to $1$ are good function approximators, and conversely, those with value $0$ are not.

There is another interpretation, in this case geometric. Let us first note that, given two functions $f,g: A^k\longrightarrow A$, the function
$$D(f,g)=|\{\vec{x}\in A^k \mid f(\vec{x})\not=g(\vec{x})\}|$$
is a distance function. This can be seen by identifying each function $f:A^k\longrightarrow A$ with a string of length $|A|^k$ over the alphabet $A$, and $D$ is the Hamming distance over the alphabet $A$; see for example \cite{Pless1998}. Therefore, the normalized function
$$d(f,g)=\frac{1}{|A|^k}D(f,g)$$
is also a distance function that measures the error or discrepancy between the two functions and satisfies that
$$\mu(f,g)=1-d(f,g) \in [0,1].$$
Given a set of functions $\mathcal{C} \subseteq \mathcal{F}(\A)$, the distance from a function $f$ to the set $\mathcal{C}$ is defined as
$$d(f,\mathcal{C})=\min_{g \in \mathcal{C}} d(f,g).$$
Then we have that
$$\Prim(\A)=1-\max_{f\in \mathcal{F}(\A)} d(f,\Clo(\A)).$$
That is, in an algebra with primality $\varepsilon$, any function is at distance at most $1-\varepsilon$ from some term of the algebra.

\begin{example} In general, groups are not good function approximators. If $\mathcal{G}$ is a non-cyclic group, then
$$\Prim (\mathcal{G}) =0.$$
Let us see this briefly. The unary clone of a group consists of the functions $t(x)=x^m$. On the other hand, note that if the group is not cyclic, then for every element $x$ of the group, there always exists an element that is not a power of $x$, since otherwise we would have that $\langle x \rangle =G$.
We can construct a unary function $g$ that disagrees at all points with any power function. For each $x\in G$ choose an element $y_x \not \in \langle x \rangle$ and define $g(x)=y_x$. We then have that $\mu(g(x), x^m)=0$, for all $m$. Thus, $\Prim (\mathcal{G})\leq \Prim_1  (\mathcal{G})=0$.
It is worth noting that for cyclic groups, the situation becomes different.
\end{example}

\begin{example} \label{ClonAfi}
We denote by $\mathbb{Z}_n^+$ the cyclic group $\mathbb{Z}_n$ enriched with all nullary functions. The clone of this algebra is the affine clone, that is, the functions of the form
$$f(x_1,\ldots, x_k)=a_1x_1 +\cdots +a_kx_k+ b,$$
with $a_1, \ldots, a_k, b \in \mathbb{Z}_n$.
For the case of order two and even arity $k$, we know the exact primalities:
$$\Prim_k(\mathbb{Z}_2^+)=\frac{1}{2}+\frac{1}{2^{\frac{k}{2}+1}},$$
and for odd arity, we have the inequalities
$$\frac{1}{2}+\frac{1}{2^{\frac{k}{2}+1}}\leq \Prim_k(\mathbb{Z}_2^+)\leq \frac{1}{2}+\frac{1}{2^{\frac{k+1}{2}}}.$$
These expressions come from the concept of the \emph{nonlinearity} of Boolean functions. Nonlinearity is defined as the Hamming distance of $f$ to the affine clone, denoted by $\nl_k(f)$. From coding theory we know that if $\nl_k=\max_{f  \in \mathcal{F}_k} \nl_k(f)$,
$$\nl_k=2^{k-1}-2^{\frac{k}{2}-1},$$
in the case of even arity, and that
$$2^{k-1}-2^{\frac{k-1}{2}}\leq \nl_k \leq 2^{k-1}-2^{\frac{k}{2}-1},$$
in the odd case. That is,
$$\Prim_k(\mathbb{Z}_2^+)=1-\frac{\nl_k}{2^k}.$$
For the even case, the nonlinearity is achieved by the so-called bent functions, functions well studied in cryptography, whereas, for the odd-arity case only a few cases are known; see \cite{carlet2010boolean}. In fact, the number $\nl_k$ coincides with the covering radius of a Reed–Muller code of order 1; see \cite{Pless1998}.

Returning to our framework, the set of $k$-ary primalities forms a decreasing sequence and therefore,
$$\Prim(\mathbb{Z}_2^+)=\frac{1}{2}.$$
\end{example}

Although it is not trivial to calculate the primality of an algebra, we can provide some bounds. We begin with an upper bound for primality, which is easy to prove; see later, however, Section~\ref{Barrera}.

\begin{theorem} \label{CotaFeble}
If $\A$ is a non-primal algebra of order $n$, then
$$\Prim(\A)\leq 1-\frac{1}{n^2}.$$
\end{theorem}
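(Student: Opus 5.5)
Since $\A$ is not primal, there is some arity $k\geq 1$ and some function $f\in\mathcal{F}_k(\A)$ with $f\notin\Clo_k(\A)$. The natural hope is to conclude that $\max_{t\in\Clo_k(\A)}\mu(f,t)\le 1-1/n^k$, because $f$ must differ from every term in at least one point. This gives $\Prim_k(\A)\le 1-1/n^k$, which is too weak when $k\geq 3$. The plan is therefore to reduce to arity at most $2$, using the classical fact that the binary part of a clone controls primality.

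First I would invoke the standard characterization: a finite algebra $\A$ with $n\geq 2$ is primal if and only if every binary operation on $A$ lies in $\Clo_2(\A)$. This holds because the binary operations include a Sheffer-type function, for instance the Webb function $w(x,y)=\max(x,y)+1 \pmod n$, which on its own generates all finitary operations. Hence if $\A$ is non-primal, some $f\in\mathcal{F}_2(\A)\setminus\Clo_2(\A)$ exists, and $w$ itself is such a function. For the trivial case $n=1$ every algebra is primal, so we may assume $n\geq 2$.

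Next, fix such a binary $f\notin\Clo_2(\A)$. For every $t\in\Clo_2(\A)$ we have $t\neq f$ as functions on $A^2$, so they disagree on at least one of the $n^2$ points. Thus $D(f,t)\geq 1$ and $\mu(f,t)\le 1-1/n^2$. Taking the maximum over the finite set $\Clo_2(\A)$ gives $\max_t\mu(f,t)\le 1-1/n^2$. Then taking the minimum over $f\in\mathcal{F}_2(\A)$ gives $\Prim_2(\A)\le 1-1/n^2$, and therefore $\Prim(\A)=\inf_k\Prim_k(\A)\le\Prim_2(\A)\le 1-1/n^2$.

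The only nontrivial ingredient is the first step, namely that non-primality is already witnessed in arity $2$. I would cite Webb's theorem (Sheffer functions exist in every finite $n$-valued logic) rather than reprove it. If a self-contained argument were wanted, I would show that $w$ generates the constants, the unary cyclic shift, the $\max$ operation and all unary characteristic functions, and from these build every $k$-ary function via a disjunctive normal form built from $\max$ and unary selectors. That is routine but lengthy.
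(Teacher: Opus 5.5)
Your proof is correct and follows the paper's argument. Both reduce to a witness of arity at most two via a classical completeness result for binary operations: you cite Webb's Sheffer function, the paper cites Sierpi\'nski's theorem that every finitary function on a finite set is a composition of binary ones. Both then observe that this $f\notin\Clo_2(\A)$ disagrees with every binary term at one or more of the $n^2$ points, and your version has the minor advantage of producing a binary witness directly, which avoids the paper's separate unary case.
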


\begin{proof}
According to a classical result of W. Sierpiński \cite{Sierpinski1945}, for every finite set $A$, every function can be expressed as a finite composition of binary operations. Thus, if $\A$ is not primal, there exists at least one function $f$ of arity at most $2$ that does not belong to $\Clo(\A)$. If $k=1$, then for every unary term of the algebra, $t\neq f$. Therefore, $f$ and $t$ disagree at least at one point, that is, $\mu(f,t)\leq (n-1)/n$. If $k=2$, the argument is the same, but now $\mu(f,t)\leq (n-1)/n^2$. Since $1-1/n \leq 1-1/n^2$ for all $n\geq 1$, we have that $\Prim(\A)\leq \Prim_2(\A)\leq 1-1/n^2$.
\end{proof}

\begin{example} \label{ExIdemPrimal}
If $\A$ is an idemprimal algebra, then for each $k>1$
$$\Prim_k (\A) \geq 1-\frac{1}{n^{k-1}}.$$
This is due to the fact that for each function $f$ of arity $k>1$ we can choose a term that agrees with $f$ at all points except at the $n$ points of the diagonal $\Delta_k$. Therefore we always have a minimal coincidence $\mu(f,t)\geq \frac{n^k -n}{n^k}=1-\frac{1}{n^{k-1}}$.

We note a seemingly paradoxical phenomenon. We have that
$$\lim_{k\to\infty} \Prim_k(\A)=1.$$
However, it may happen that the global primality is zero, $\Prim(\A)=0$. This is because an idemprimal algebra may fail to approximate unary functions. In other words, idemprimal algebras give good approximations only for large arities.
\end{example}

\begin{proposition}\label{Primaldos}
Let $\A$ be an algebra of order $n$ with at least one operation of arity greater than or equal to two, and let $\rho: A \longrightarrow A$ be a cyclic permutation. Let $\A^\rho$ be the algebra $\A$ enriched with the unary operation $\rho$. We have that for all $k\geq 1$,
$$\Prim_k (\A^\rho)\geq \frac{1}{n}.$$
\end{proposition}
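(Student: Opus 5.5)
Fix $k\ge 1$ and an arbitrary $f\in\mathcal{F}_k(\A)$. The plan is an averaging argument over a family of $n$ terms that, at each point, run through every element of $A$ exactly once. If $t\in\Clo_k(\A^\rho)$, then the terms $t,\ \rho\circ t,\ \rho^2\circ t,\ \ldots,\ \rho^{n-1}\circ t$ all lie in $\Clo_k(\A^\rho)$, since $\rho$ is a basic operation and clones are closed under composition. Because $\rho$ is a cyclic permutation of the $n$-element set $A$, i.e.\ a single $n$-cycle, for each fixed $\vec{x}\in A^k$ the values $\rho^j(t(\vec{x}))$ for $j=0,\ldots,n-1$ are exactly the $n$ distinct elements of $A$. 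Hence exactly one $j$ satisfies $\rho^j(t(\vec{x}))=f(\vec{x})$.

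Summing over $\vec{x}\in A^k$ gives
$$\sum_{j=0}^{n-1}\bigl|\{\vec{x}\mid \rho^j(t(\vec{x}))=f(\vec{x})\}\bigr| = n^k,$$
that is, $\sum_{j}\mu(f,\rho^j\circ t)=1$. By pigeonhole some $j$ satisfies $\mu(f,\rho^j\circ t)\ge 1/n$. Therefore $\max_{t\in\Clo_k(\A^\rho)}\mu(f,t)\ge 1/n$. Since $f$ was arbitrary, taking the minimum over $f$ gives $\Prim_k(\A^\rho)\ge 1/n$.

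The only point needing care is having at least one $k$-ary term $t$ to start from. For $k\ge1$ the projection $x_1$ always belongs to $\Clo_k$, so this is automatic. The hypothesis that some operation has arity $\ge 2$ is not actually needed for this bound. It is presumably there for context, for instance to connect with idemprimality via Murski\v{\i}'s result, and I would simply remark on this.
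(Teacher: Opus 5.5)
Your proof is correct and is the same averaging argument the paper uses. The paper likewise fixes a $k$-ary term $t$, notes (via a Kronecker delta) that exactly one of $\rho^0\circ t,\dots,\rho^{n-1}\circ t$ agrees with $f$ at each point, deduces $\sum_{i=0}^{n-1}\mu(f,\rho^i\circ t)=1$, and concludes by pigeonhole. Your side remark is also right: the paper invokes the arity-$\geq 2$ hypothesis only to guarantee that $\Clo_k$ is non-empty, and this already follows from the projections belonging to every clone.
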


\begin{proof}
Consider the Kronecker delta function $\delta: A^2 \longrightarrow \{0,1\}$, defined as $\delta(x,y)=1$ if $x=y$, and $\delta(x,y)=0$ if $x\not=y$. Since the algebra has an operation of arity $\geq2$, each set $\Clo_k(\A)$ is non-empty. Fixing an arity, take a $k$-ary term of the algebra and also $f$ any $k$-ary function. Now note that since $\rho$ is a cycle of order $n$, for a fixed $\vec{x} \in A^k$, there exists $i\in \{0,\ldots, n-1\}$ such that
$$ \delta \big( f(\vec{x}),(\rho^i \circ t)(\vec{x}) \big)=1 \quad \mbox{and} \quad  \delta \big( f(\vec{x}), (\rho^j\circ t)( \vec{x}) \big)=0, \,\, \forall j \in \{0,\ldots, n-1\} \setminus \{i\}.$$
Now consider the quantity $S$, which equals 1:
$$S=\frac{1}{n^k} \sum_{\vec{x}\in A^k}\sum_{i=0}^{n-1} \delta \big( f(\vec{x}), (\rho^i \circ  t)(\vec{x}) \big)= \frac{1}{n^k} \sum_{\vec{x}\in A^k}1=\frac{1}{n^k}n^k=1.$$
However, $S$ can be computed in another way. Interchanging the sums and the factor $1/n^k$, we obtain
$$S=\sum_{i=0}^{n-1} \frac{1}{n^k} \sum_{\vec{x}\in A^k} \delta \big( f(\vec{x}), (\rho^i \circ t)(\vec{x})\big)=\sum_{i=0}^{n-1} \mu(f,\rho^i \circ t).$$
That is,
$$\sum_{i=0}^{n-1} \mu(f, \rho^i \circ t) =1.$$
This means that we can find some $i=0, \ldots, n-1$ such that $ \mu(f,\rho^i \circ t)\geq 1/n$, otherwise the sum would be strictly less than 1. In other words, we can always approximate a function $f$ by some term $\rho^i \circ t$ in such a way that the coincidence ratio is greater than or equal to $1/n$.
\end{proof}

A. L. Foster \cite{Foster1953} proved that if we extend a group $\mathcal{G}=(G, \cdot, 1)$ with an absorbing element $0$, $G'=G \cup \{0\}$ (that is, $x\cdot 0=0\cdot x=0$), and with a cyclic permutation of $G'$, then the resulting algebra $\mathcal{G}'=(G', \cdot, 1, 0, \rho)$ is primal. This construction has some similarities with the algebras in Proposition~\ref{Primaldos}, but without the presence of the absorbing element or the group structure. Thus, the degree of primality can be very sensitive to small changes in the structure.

We also note that the clone generated by the algebra $\mathbb{Z}_n^+$ is the affine clone, which coincides with the clone of $\mathbb{Z}_n^\rho$. Applying Proposition~\ref{Primaldos},
$$\Prim_k( \mathbb{Z}_n^+)\geq \frac{1}{n}.$$

\section{Size of the spectrum and primality} \label{MidaPrim}

The Hamming metric on the space of functions allows us to establish a relationship between the size of the spectrum and primality.

\begin{lemma} \label{LemmaQuadrilater}
Let $\A$ be a non-trivial algebra with $\Prim_k(\A)=\varepsilon$. For every $\alpha \in [0,1]$ there exists an equation $t\approx t'$ with $k$ variables such that
$$\Pr(t\approx t' \mid \A ) \in [\alpha -2\bar{\varepsilon}, \alpha +2\bar{\varepsilon}],$$
where $\bar{\varepsilon}=1-\varepsilon$.
\end{lemma}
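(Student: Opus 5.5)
The plan is to mimic the proof of Theorem~\ref{TeoDensPrimal}, replacing exact terms by approximating terms and controlling the error with the Hamming distance $d$. Since $\A$ is non-trivial, I fix $a\neq b$ in $A$ and write $n=|A|$. Given $\alpha\in[0,1]$, I choose a subset $B\subseteq A^k$ with $|B|/n^k$ as close to $\alpha$ as possible; then $\bigl||B|/n^k-\alpha\bigr|\le 1/n^k$. The target pair of functions is
$$f_B(\vec{x})=\begin{cases} b & \vec{x}\in B,\\ a & \text{otherwise,}\end{cases}\qquad f_b(\vec{x})=b .$$
The exact equation $f_B\approx f_b$ then has solution set $B$.

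By the definition of $\Prim_k(\A)=\varepsilon$, for every $k$-ary function $f$ there is some $t\in\Clo_k(\A)$ with $\mu(f,t)\ge\varepsilon$, that is, $d(f,t)\le\bar{\varepsilon}$. I will pick terms $t,t'$ with $d(t,f_B)\le\bar\varepsilon$ and $d(t',f_b)\le\bar\varepsilon$. These are terms in $k$ variables, so I regard $t\approx t'$ as an equation with $k$ variables, possibly with dummy variables.

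The key estimate is the symmetric difference of the solution sets. If $\vec{x}$ lies outside both disagreement sets $\{t\neq f_B\}$ and $\{t'\neq f_b\}$, then $\vec{x}$ satisfies $t\approx t'$ exactly when $f_B(\vec{x})=f_b(\vec{x})$, i.e.\ when $\vec{x}\in B$. Hence $\{t\approx t'\}_\A\,\triangle\, B$ is contained in the union of the two disagreement sets, which has at most $2\bar\varepsilon n^k$ elements. This gives
$$\bigl|\Pr(t\approx t'\mid\A)-|B|/n^k\bigr|\le 2\bar\varepsilon .$$

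The main obstacle is the rounding term $1/n^k$, which would spoil the exact bound $2\bar\varepsilon$. To remove it, I will not choose $B$ naively. Instead I use the freedom in $B$ directly: choose $|B|=\lfloor\alpha n^k\rfloor$ or $\lceil\alpha n^k\rceil$, whichever suits. Alternatively, I note that if $\bar\varepsilon=0$ then $\A$ is $k$-primal and exact realization needs $\alpha$ to be $n$-adic, so the statement as written presumably intends $\alpha$ up to this discretization.

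The cleanest fix is to absorb the rounding into the slack. Since $\mu$ takes values in $\frac{1}{n^k}\mathbb{Z}$, whenever $\bar\varepsilon>0$ we have $\bar\varepsilon\ge 1/n^k$. We may also take $\alpha n^k$ in the range where rounding helps, or use the one-sided bounds separately. If $\bar\varepsilon=0$, the claim reduces to Theorem~\ref{TeoDensPrimal}-type exactness and is only meaningful for $n$-adic $\alpha$. I expect to handle this boundary case with a remark rather than an argument. I will also sanity-check that $k\ge1$ suffices to make all the functions involved well-defined.
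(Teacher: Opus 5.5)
Your core estimate is correct. The inclusion $\{t\approx t'\}_\A\,\triangle\,B\subseteq\{t\neq f_B\}\cup\{t'\neq f_b\}$ is the Hamming-distance form of the paper's quadrilateral argument, which the paper carries out only for $\alpha=r/n^k$.

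The gap is the passage to arbitrary $\alpha$. Your argument actually yields $|\Pr(t\approx t'\mid\A)-\alpha|\le 2\bar\varepsilon+\frac{1}{2n^k}$, and none of the repairs you list removes the extra term:
\begin{itemize}
\item Choosing $|B|=\lfloor\alpha n^k\rfloor$ or $\lceil\alpha n^k\rceil$ ``whichever suits'' presupposes knowing the sign of $\Pr(t\approx t'\mid\A)-|B|/n^k$. Your estimate does not exclude this difference being $-2\bar\varepsilon$ for the floor choice and $+2\bar\varepsilon$ for the ceiling choice at the same time.
\item The bound $\bar\varepsilon\ge 1/n^k$ cannot absorb the rounding, because the two approximation errors already use up the whole budget $2\bar\varepsilon$.
\end{itemize}
The paper's own extension step does not close this gap either. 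From $|\alpha-\alpha'|\le\bar\varepsilon$ and $\Pr(t_\alpha\approx t_a\mid\A)\in[\alpha-2\bar\varepsilon,\alpha+2\bar\varepsilon]$ one only gets $|\Pr(t_\alpha\approx t_a\mid\A)-\alpha'|\le 3\bar\varepsilon$. Moreover, with $k$ fixed the admissible $\alpha$ form the grid $\frac{1}{n^k}\mathbb{Z}$, which is not dense.

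Your diagnosis of $\bar\varepsilon=0$ is right, and it should be stated as a counterexample rather than a remark. For a primal $\A$ and irrational $\alpha$, no equation has probability $\alpha$. So in that case the lemma must be restricted to $\alpha\in\frac{1}{n^k}\mathbb{Z}\cap[0,1]$.

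The missing idea for $\bar\varepsilon>0$ is to approximate only one side of the equation.
\begin{itemize}
\item Let $t'$ be a genuine $k$-ary term, e.g.\ the projection $x_1\in\Clo_k(\A)$.
\item Fix a map $\rho\colon A\to A$ without fixed points; it exists since $|A|\ge 2$.
\item Set $g_B(\vec x)=t'(\vec x)$ for $\vec x\in B$ and $g_B(\vec x)=\rho(t'(\vec x))$ otherwise.
\item Pick $t\in\Clo_k(\A)$ with $d(g_B,t)\le\bar\varepsilon$.
\end{itemize}
On the set $\{t=g_B\}$ the equation $t\approx t'$ holds exactly on $B$. Hence $\{t\approx t'\}_\A\,\triangle\,B\subseteq\{t\neq g_B\}$, and so $\bigl|\Pr(t\approx t'\mid\A)-|B|/n^k\bigr|\le\bar\varepsilon$.

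Now take $|B|$ to be the integer nearest to $\alpha n^k$. Use your observation that $\bar\varepsilon\in\frac{1}{n^k}\mathbb{Z}$, hence $\bar\varepsilon\ge\frac{1}{n^k}$. This gives $|\Pr(t\approx t'\mid\A)-\alpha|\le\bar\varepsilon+\frac{1}{2n^k}\le\frac{3}{2}\bar\varepsilon$, which proves the statement for every $\alpha$ when $\bar\varepsilon>0$. At grid points $\alpha=r/n^k$ it even gives $\bar\varepsilon$ in place of $2\bar\varepsilon$.
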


\begin{proof}
First, we prove the statement for $n$-adic numbers. That is, assume $\alpha \in [0,1]$ is $n$-adic, meaning of the form $r/n^k$, for some $0\leq r \leq n^k$ and $k\geq 0$. For such an $\alpha$, consider a function
$f_\alpha:A^k\longrightarrow A$ defined as
$$f_\alpha(\vec{x}) =
\begin{cases}
a & \text{if } \vec{x}\in R,\\
b & \text{otherwise},
\end{cases}$$
where $b\not= a$. Such a function is well defined since the algebra is non-trivial and contains at least two elements, $a$ and $b$. 
Denote by $f_a$ the constant $k$-ary function taking the value $a$, that is, $f_a(\vec{x})=a$ for all $\vec{x} \in A^k$. We have that $\mu(f_\alpha, f_a)=\alpha$, equivalently $d(f_\alpha, f_a)=1-\alpha$.

Since $\Prim_k(\A)=\varepsilon$, we can choose terms $t_\alpha$ and $t_a$ such that
$$\mu(f_\alpha,t_\alpha)\geq \varepsilon, \qquad \mu(f_a,t_a)\geq \varepsilon.$$
Writing $\bar{\varepsilon}=1-\varepsilon$, we obtain
$$d(f_\alpha,t_\alpha)\leq \bar{\varepsilon}, \qquad d(f_a,t_a)\leq \bar{\varepsilon}.$$
In the metric space of all functions endowed with distance $d$, consider the quadrilateral formed by the points $f_a,t_a,f_\alpha$, and $t_\alpha$; see Figure~\ref{FigQuadrilater}. Applying the triangle inequality twice, we get
\begin{align*}
d(t_\alpha, t_a)
&\leq d(t_\alpha, f_\alpha)+d(f_\alpha, f_a)+ d(f_a, t_a)\\
&\leq \bar{\varepsilon}+(1-\alpha)+\bar{\varepsilon}
=1-\alpha+2\bar{\varepsilon}.
\end{align*}
On the other hand,
\begin{align*}
1-\alpha=d(f_\alpha, f_a)
&\leq d(f_\alpha, t_\alpha)+d(t_\alpha, t_a)+ d(t_a, f_a)\\
&\leq \bar{\varepsilon}+d(t_\alpha, t_a)+\bar{\varepsilon}
=d(t_\alpha, t_a)+2\bar{\varepsilon}.
\end{align*}
Combining both inequalities,
$$(1-\alpha)-2\bar{\varepsilon}\leq d(t_\alpha, t_a)\leq (1-\alpha)+2\bar{\varepsilon}.$$
That is,
$$\alpha+2\bar{\varepsilon}\geq \mu(t_\alpha, t_a) \geq \alpha-2\bar{\varepsilon}.$$
Since $\mu(t_\alpha, t_a)=\Pr(t_\alpha \approx t_a  \mid \A)$, we obtain
$$\Pr(t_\alpha \approx t_a  \mid \A) \in [\alpha-2\bar{\varepsilon}, \alpha+2\bar{\varepsilon}].$$

It remains to remove the assumption that $\alpha$ is $n$-adic. This can be done directly, since $n$-adic numbers are dense, and for any real number we can find an $n$-adic number arbitrarily close to it. More precisely, given $\alpha' \in [0,1]$, we can find an $n$-adic number $\alpha$ such that $|\alpha-\alpha'|\leq \bar{\varepsilon}$, and then the (not necessarily $n$-adic) number $\alpha'$ also belongs to the interval $[\alpha -2\bar{\varepsilon}, \alpha +2\bar{\varepsilon}]$.
\end{proof}

\begin{figure}[tb] 
\begin{tikzpicture}[scale=0.43]
  \tikzstyle{vertex}=[circle, fill=black,minimum size=4pt,inner sep=1pt]
  \node[vertex] (A) at (2,6) {};
  \node[vertex] (B) at (0,0)   {};
  \node[vertex] (C) at (8,1)   {};
  \node[vertex] (D) at (7,7)   {};
  \draw[thick] (A) -- (B) -- (C) -- (D)--(A);
    \draw[thick] (A) -- (C);
        \draw[thick] (B) -- (D);
  \node[align=left] at (1,6) {$t_\alpha$};
  \node[align=left] at (-1,0) {$f_\alpha$};
  \node[align=left] at (9,1) {$f_a$};
  \node[align=left] at (8,7) {$t_a$};
  \node[align=left] at (-2,3) {$d(t_\alpha,f_\alpha)\leq \bar{\varepsilon}$};
  \node[align=left] at (5,-0.75) {$d(f_\alpha,f_a)=1-\alpha$};
  \node[align=left] at (10,4) {$d(f_a, t_a)\leq \bar{\varepsilon}$};
 \end{tikzpicture}
\caption{Quadrilateral in the proof of Lemma~\ref{LemmaQuadrilater}. The segments represent the distances according to the normalized Hamming metric.} \label{FigQuadrilater}
\end{figure}
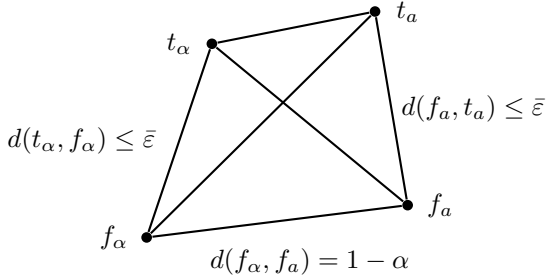

\begin{theorem}\label{TeoQuadri}
For any non-trivial algebra $\A$,
$$|\PSpec(\A)|\,\geq \,\sup_{k\geq 1} \, \left \lfloor \frac{ 1}{4(1-\Prim_k(\A))}\right\rfloor .$$
In particular,
$$|\PSpec(\A)|\,\geq \left\lfloor \frac{ 1}{4(1-\Prim (\A))}\right\rfloor.$$
\end{theorem}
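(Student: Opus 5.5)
Fix an arity $k\geq 1$ and write $\varepsilon=\Prim_k(\A)$, $\bar{\varepsilon}=1-\varepsilon$ and $m=\lfloor 1/(4\bar{\varepsilon})\rfloor$. The plan is to apply Lemma~\ref{LemmaQuadrilater} at $m$ well-separated target values $\alpha$ and show that the equations it produces have pairwise distinct probabilities. If $\bar{\varepsilon}=0$, then $\Prim_k(\A)=1$. In that case every $k$-ary function is a term, and the proof of Theorem~\ref{TeoDensPrimal} applies verbatim in arity $k$: indicator functions of subsets of $A^k$ are terms, so the spectrum is infinite and the bound holds trivially. We may therefore assume $\bar{\varepsilon}>0$, and also $m\geq 2$, since for $m\le 1$ the bound follows from $1\in\PSpec(\A)$.

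The key step is the choice of centres. We take $\alpha_j=j\cdot 4\bar{\varepsilon}$ for $j=0,1,\ldots,m-1$. Since $(m-1)4\bar{\varepsilon}\leq 1-4\bar{\varepsilon}<1$, all these points lie in $[0,1]$. Lemma~\ref{LemmaQuadrilater} gives, for each $j$, an equation with probability $p_j\in[\alpha_j-2\bar{\varepsilon},\alpha_j+2\bar{\varepsilon}]$.

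Consecutive closed intervals have length $4\bar{\varepsilon}$ and share exactly one endpoint, $\alpha_j+2\bar{\varepsilon}=\alpha_{j+1}-2\bar{\varepsilon}$. Distinct indices could therefore give the same probability only if both equal that common endpoint. This is the delicate step, and there are two ways to handle it. The first is to shrink the spacing slightly: use $\alpha_j=j\cdot 4\bar{\varepsilon}(1+\eta)$ with small $\eta>0$. This still fits $m$ points in $[0,1]$ whenever $(m-1)4\bar{\varepsilon}<1$, which holds because $m-1<1/(4\bar{\varepsilon})$. The closed intervals are then pairwise disjoint. The second is to use $1\in\PSpec(\A)$ as an extra value to absorb a possible coincidence at the top. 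I would try the first, since it keeps the intervals disjoint, so the $p_j$ are pairwise distinct and $|\PSpec(\A)|\geq m$.

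Finally, since $k$ was arbitrary, take the supremum over $k$. For the second inequality, note that $\Prim(\A)=\inf_k\Prim_k(\A)\leq\Prim_k(\A)$ for every $k$. Hence $1/(4(1-\Prim(\A)))\leq 1/(4(1-\Prim_k(\A)))$, so the floor of the left side is bounded by the supremum of the floors; when $\Prim(\A)=1$ the algebra is primal and its spectrum is infinite by Theorem~\ref{TeoDensPrimal}. The main obstacle is only the boundary bookkeeping in the disjointness step. One more detail needs checking: Lemma~\ref{LemmaQuadrilater} allows arbitrary real $\alpha$, so non-$n$-adic centres pose no problem, but its last sentence must actually deliver a probability inside the stated interval around the given $\alpha$. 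If it does not, I would instead choose the centres $n$-adic directly.
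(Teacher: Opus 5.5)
Your strategy is the paper's: apply Lemma~\ref{LemmaQuadrilater} at $m$ centres spaced $4\bar\varepsilon$ apart and count the values obtained. The paper takes the open intervals $(4\bar\varepsilon i,4\bar\varepsilon(i+1))$ and asserts that each contains a spectral value. But the lemma, applied at the midpoint $(4i+2)\bar\varepsilon$, only gives the \emph{closed} interval, so the paper has exactly the shared-endpoint problem you found. Your ``absorb it with $1$'' alternative would not help, because coincidences can occur at every interior endpoint. Your $\eta$-perturbation does repair this, and your reduction of the second inequality is fine, \emph{provided} the lemma holds for arbitrary real $\alpha$ as stated. Two points need correcting.

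First, your doubt about the lemma is justified, and your fallback does not work. The lemma's last paragraph only shows that the target $\alpha'$ lies within $\bar\varepsilon$ of an $n$-adic $\alpha$. That places the produced probability within $3\bar\varepsilon$ of $\alpha'$, not $2\bar\varepsilon$. Moreover, $\alpha$ must lie in $\frac{1}{n^k}\mathbb{Z}$ for the fixed arity $k$. Since $\bar\varepsilon=s/n^k$ with $s$ an integer, the best this argument gives is $2\bar\varepsilon+1/(2n^k)$. Your perturbed centres lie off that grid, so your main route rests on the unproved part of the lemma.

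Restricting to grid centres cannot recover the constant $4$. The statement that is actually proved is: every $r/n^k$ lies within $2s/n^k$ of some spectral value. This is already satisfied by a set of only $\lceil (n^k+1)/(4s+1)\rceil$ grid points with gaps of at most $(4s+1)/n^k$. For $s=1$ and $n^k=100$ that is $21$ points, while $\lfloor 1/(4\bar\varepsilon)\rfloor=25$. What the proved part does give rigorously is $|\PSpec(\A)|\ge (n^k+1)/(4s+1)>1/(5\bar\varepsilon)$. So the stated constant needs a genuine real-$\alpha$ version of the lemma, which the paper states but does not prove.

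Second, your treatment of the case $\bar\varepsilon=0$ is wrong as written. Running the proof of Theorem~\ref{TeoDensPrimal} in arity $k$ only produces the $n^k+1$ values $r/n^k$, not an infinite spectrum.

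For $k\ge 2$ you can repair it. If every $k$-ary operation is a term, then by identifying variables every binary operation is a term. Hence $\A$ is primal by Sierpi\'nski's theorem, and Theorem~\ref{TeoDensPrimal} applies.

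For $k=1$ the claim is false. Take the algebra whose basic operations are all maps $A\to A$, with $|A|\ge 2$. Then $\Prim_1(\A)=1$, but every term has the form $u(x_i)$. So every equation has at most two variables and $\PSpec(\A)$ is finite. The case $\Prim_k(\A)=1$ with $k=1$ (where the bound reads $\lfloor 1/0\rfloor$) must therefore be excluded from the statement rather than proved; the paper does not address it.
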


\begin{proof}
Fix an arity $k>0$. If $m$ is the integer
$m=\left\lfloor \frac{1}{4\bar{\varepsilon}}\right\rfloor$, where $\bar{\varepsilon}=1-\Prim_k(\A)$, then we can construct $m$ pairwise disjoint intervals inside $[0,1]$,
$$\bigcup_{i=0}^{m-1} (4\bar{\varepsilon} i, 4\bar{\varepsilon} (i+1) ) \subseteq [0,1].$$
By Lemma~\ref{LemmaQuadrilater} above, each of these intervals contains at least one value of the spectrum of $\A$. Since this holds for every arity $k$, we can take the supremum.
\end{proof}

Since all the intervals $(4\bar{\varepsilon} i, 4\bar{\varepsilon} (i+1))$ in the previous proof have the same width $4\bar{\varepsilon}$, this theorem tells us that these $m$ spectral values are roughly uniformly distributed in the unit interval. For a fixed arity, the distance between two consecutive values can never exceed $8\bar{\varepsilon}$ in this construction. In fact, the following result holds.

\begin{corollary} \label{CoroQuadri}
If $\A$ is a non-trivial algebra such that $\sup_{k\geq 1} \Prim_k (\A)=1$, then its spectrum is dense.
\end{corollary}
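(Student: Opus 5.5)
The plan is to derive the corollary directly from Lemma~\ref{LemmaQuadrilater}, working with a single arity at a time. Fix $\alpha\in[0,1]$ and $\delta>0$; I want an element of $\PSpec(\A)$ in $[\alpha-\delta,\alpha+\delta]$. Since $\sup_{k\geq1}\Prim_k(\A)=1$, there is an arity $k$ with $\bar\varepsilon_k=1-\Prim_k(\A)<\delta/2$. Lemma~\ref{LemmaQuadrilater}, applied with this $k$, gives an equation $t\approx t'$ in $k$ variables with
\[
\Pr(t\approx t'\mid\A)\in[\alpha-2\bar\varepsilon_k,\alpha+2\bar\varepsilon_k]\subseteq[\alpha-\delta,\alpha+\delta].
\]
So every neighbourhood of $\alpha$ meets the spectrum.

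To match the paper's definition of density (each $\alpha$ is either in the spectrum or a limit point of it), I split into two cases. If $\alpha\in\PSpec(\A)$, there is nothing to prove. If $\alpha\notin\PSpec(\A)$, the value found above is automatically different from $\alpha$, so it lies in $U_\alpha\setminus\{\alpha\}$ for every neighbourhood $U_\alpha$; hence $\alpha$ is a limit point. This handles all $\alpha\in[0,1]$, including the endpoints, because the lemma is stated for every $\alpha\in[0,1]$ and we only need the intersection of the interval with $[0,1]$.

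I expect the main obstacle to be whether Lemma~\ref{LemmaQuadrilater} really delivers a spectral value arbitrarily close to a non-$n$-adic $\alpha$. Its final step only says that $\alpha$ lies in an interval of radius $2\bar\varepsilon$ around an $n$-adic point, which as written gives radius about $3\bar\varepsilon$ around $\alpha$ itself. To avoid depending on that wording, I would apply the $n$-adic part of the lemma directly. First choose an $n$-adic $\alpha_0$ with $|\alpha-\alpha_0|<\delta/2$; this is possible since $n$-adic numbers are dense. Then choose $k$ large enough that $\alpha_0=r/n^k$ is expressible with denominator $n^k$, which is possible because enlarging $k$ only rescales the numerator, and also such that $2\bar\varepsilon_k<\delta/2$.

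For that second requirement, $\sup_k\Prim_k=1$ alone only supplies some $k$ with $\Prim_k$ close to $1$, not all large $k$. I would handle this by noting that $\Prim_k$ is nonincreasing in $k$. The reason is that a $k$-ary function can be viewed as a $(k+1)$-ary function through a dummy variable, and then coincidence ratios with terms are preserved. Monotonicity then gives $\Prim_k=1$ for the good $k$ and for all smaller $k$, which would force primality. So I would more simply re-index: take the good $k$ and write $\alpha_0$ with denominator $n^k$ by choosing $\alpha_0$ among $n$-adics of level $k$. For this I need $1/n^k<\delta/2$, and that is automatic as soon as $\bar\varepsilon_k$ is small and positive, since a non-primal algebra has $\bar\varepsilon_k\ge 1/n^k$. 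In the primal case density is already given by Theorem~\ref{TeoDensPrimal}. With this choice, the triangle inequality gives a spectral value within $\delta$ of $\alpha$.
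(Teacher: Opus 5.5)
Your first paragraph is exactly the paper's proof: choose $k$ with $\bar\varepsilon_k$ small and invoke Lemma~\ref{LemmaQuadrilater}. Your distrust of that lemma is justified. Since $f_\alpha$ is $k$-ary, only $n$-adics of level $k$ are available, so the last step gives at best radius $2\bar\varepsilon+\frac{1}{2n^k}$. For $\bar\varepsilon=0$ the lemma is simply false, since it would put every real $\alpha\in[0,1]$ into the spectrum of a primal algebra, against Theorem~\ref{TeoDensPrimal}.

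Your repair, however, breaks at the claim that a non-primal algebra has $\bar\varepsilon_k\ge 1/n^k$. For $k\ge2$ this is correct: if every $k$-ary function is a term, pad a binary $g$ with dummy variables and identify them back to recover $g$, then apply Sierpi\'nski as in Theorem~\ref{CotaFeble}. For $k=1$ it is false, and this cannot be patched, because the corollary itself is false. Take $\A=(\{0,1\},\neg,c_0,c_1)$ with $c_0,c_1$ the two constant unary maps. All four unary maps are terms, so $\Prim_1(\A)=1$ and the hypothesis $\sup_{k\ge1}\Prim_k(\A)=1$ holds. But every term function is a constant, some $x_i$, or some $\neg x_i$, so $\PSpec(\A)=\{0,\tfrac12,1\}$, which is not dense. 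For $k\ge2$ one has $\Prim_k(\A)\le\frac12$, witnessed by $x_1\oplus x_2$. So the only ``good'' arity is $k=1$, where $\bar\varepsilon_1=0$, and there both you and the paper apply the lemma in its false regime.

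Drop the monotonicity claim too. $\Prim_k$ is not nonincreasing in $k$: Example~\ref{ExIdemPrimal} gives $\Prim_k\to1$ while $\Prim_1$ can be $0$. The dummy-variable argument fails because fixing the extra argument of a $(k+1)$-ary term yields only a polynomial function, not a $k$-ary term. Even if monotonicity held, $\sup_k\Prim_k=1$ would only give $\Prim_1=1$, which, as the example shows, does not force primality.

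What your final paragraph does prove, with the $k\ge2$ claim justified as above, is the corrected statement under the hypothesis $\sup_{k\ge2}\Prim_k(\A)=1$ (equivalently $\limsup_{k\to\infty}\Prim_k(\A)=1$). If $\Prim_k(\A)=1$ for some $k\ge2$, then $\A$ is primal and Theorem~\ref{TeoDensPrimal} applies. Otherwise $\bar\varepsilon_k\ge 1/n^k$ for every $k\ge2$. Take a level-$k$ $n$-adic $\alpha_0$ with $|\alpha-\alpha_0|\le\frac{1}{2n^k}\le\frac{\bar\varepsilon_k}{2}$. The $n$-adic part of the lemma then yields a spectral value within $\frac52\bar\varepsilon_k$ of $\alpha$, and $\bar\varepsilon_k$ can be made arbitrarily small.
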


\begin{proof}
Let $\varepsilon_k=\Prim_k(\A)$ and $\bar{\varepsilon}_k=1-\varepsilon_k$. As just discussed, the distance between two consecutive values in the interval decomposition for arity $k$ can never exceed $8\bar{\varepsilon}_k$. Since the supremum of the arity-$k$ primalities is 1, there exists  a subsequence $\varepsilon_{k_i}$ such that $\lim_{i\to \infty} \bar{\varepsilon}_{k_i}=0$. That is, by choosing $k$ sufficiently large, we find spectral values arbitrarily close to any given point.
\end{proof}

\begin{example}
Consider the two-element algebra
$\mathcal{A}=(\{0,1\}, \vee, \wedge)$.
It is well known that the terms of $\A$ are exactly the functions $t$ that fix $0$, that is, $t(0,\ldots,0)=0$.
For any function $f$, there is always a term $t$ that disagrees with $f$ at most at the point $(0,\ldots,0)$. Therefore, $\mu(f,t)\geq 1-1/2^k$, and hence $\lim_{k\to \infty}\Prim_k (\A)=1$. By Corollary~\ref{CoroQuadri}, the spectrum of $\A$ is dense.
\end{example}

\section{A barrier in the approximation of Boolean functions} \label{Barrera}

To conclude, we prove a general theorem about the approximation of Boolean functions.
Recall Theorem~\ref{CotaFeble}. If we apply it to obtain a bound on the primality of non-primal algebras with two elements, we obtain $\Prim(\A) \leq 3/4$. This value can be improved. We will see that if an algebra has two elements and is not primal, then its primality cannot exceed $1/2$, and that this bound is tight.

From E. L. Post’s classification of Boolean clones, \cite{Post1940}, we know that if an algebra is not primal, then the clone of its term functions is contained in one of the following five maximal clones: $\mathbf{P}_0$, the set of functions fixing $0$; $\mathbf{P}_1$, the set fixing $1$; $\mathbf{M}$, the set of monotone functions; $\mathbf{D}$, the set of self-dual functions; and $\mathbf{A}$, the set of affine functions. Each of these clones can be generated by some algebra with a finite signature.

Let us assume that $\A$ is a non-primal algebra and examine the degree of primality according to the maximal clone containing $\Clo(\A)$. Except for the affine clone, it suffices to consider unary terms to see that primality cannot exceed $1/2$, since $\Prim(\A) \leq \Prim_1(\A)$. There are four possible unary functions: the identity $\id$, the constant $1$, the constant $0$, and the self-dual function $d$, which swaps $0$ and $1$.

\begin{enumerate}
\item Suppose that $\Clo(\A) \subseteq \mathbf{P}_0$. The terms fixing $0$ are the functions $\id$ and $0$. Considering the function $1$, we have $\mu(1,\id)= 1/2$ and $\mu(1,0)=0$. Taking the function $d$, we have $\mu(d,\id)= 0$ and $\mu(d,0)=1/2$. Therefore,
$$ \Prim_1(\A)=\min_{f\in \{0,1, \id, d\}}\max_{t\in \{0,\id\}} \mu(f,t)=\frac{1}{2}.$$

\item $\Clo(\A) \subseteq \mathbf{P}_1$. This case is similar to (1), but now the unary clone contains only the functions $\id$ and $1$, and again $\Prim_1(\A)=1/2$.

\item $\Clo(\A) \subseteq \mathbf{M}$. The unary terms preserving order can only be $0$, $1$, and $\id$. Taking the self-dual function, we have $\mu(d,0)=\mu(d,1)=1/2$ and $\mu(d,\id)=0$. Therefore, $ \Prim_1(\A)=1/2$.

\item $\Clo(\A) \subseteq \mathbf{D}$. Its unary clone contains only $\id$ and $d$. Then $\mu(1,\id)=\mu(1,d)=1/2$. Hence $\Prim_1(\A)=1/2$.

\item $\Clo(\A) \subseteq \mathbf{A}$. This is the most interesting case, already studied in Example~\ref{ClonAfi}. Thanks to coding theory and bent functions, we know that the arity-$k$ primalities form a sequence satisfying
$$\frac{1}{2}+\frac{1}{2^{\frac{k}{2}+1}}\leq \Prim_k(\mathbb{Z}_2^+)\leq \frac{1}{2}+\frac{1}{2^{\frac{k+1}{2}}},$$
valid for both even and odd arity, where we recall that $\Clo(\mathbb{Z}_2^+)=\mathbf{A}$.
\end{enumerate}
Thus, $\Prim(\A)=\inf_{k\geq 1} \Prim_k(\A)=1/2$. Moreover, the affine case shows that the bound is tight. We have proved the following theorem.

\begin{theorem}
Let $\A$ be an algebra of order two. Then, $\A$ is not primal if and only if $\Prim (\A)\leq \frac{1}{2}$.
\end{theorem}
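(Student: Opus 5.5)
The plan is to prove the two directions separately, using Post's classification for the substantive one.

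For the direction ``$\Prim(\A)\le \frac12$ implies $\A$ is not primal'', argue by contraposition. If $\A$ is primal then $\Clo_k(\A)=\mathcal{F}_k(\A)$ for every $k\ge 1$. Hence every $f$ is approximated by itself, with $\mu(f,f)=1$, so each $\Prim_k(\A)=1$ and $\Prim(\A)=1>\frac12$. This is immediate from the definition.

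For the direction ``non-primal implies $\Prim(\A)\le\frac12$'', invoke Post's theorem: $\Clo(\A)$ lies in one of the maximal clones $\mathbf{P}_0,\mathbf{P}_1,\mathbf{M},\mathbf{D},\mathbf{A}$. Two monotonicity facts then reduce everything to estimates on these clones.
\begin{enumerate}
\item[(a)] If $\Clo_k(\A)\subseteq\mathcal{C}_k$, then $\max_{t\in\Clo_k(\A)}\mu(f,t)\le\max_{t\in\mathcal{C}_k}\mu(f,t)$ for every $f$. So $\Prim_k(\A)$ is bounded by the corresponding quantity for the maximal clone $\mathcal{C}$.
\item[(b)] $\Prim(\A)\le\Prim_k(\A)$ for every $k$.
\end{enumerate}
The proof then splits into the five cases.

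For $\mathbf{P}_0,\mathbf{P}_1,\mathbf{M},\mathbf{D}$ it suffices to take $k=1$. List the unary members of each clone ($\{0,\id\}$, $\{1,\id\}$, $\{0,1,\id\}$, $\{\id,d\}$ respectively). Then exhibit a unary target function agreeing with each listed member on at most one of the two points: $1$ or $d$ for $\mathbf{P}_0$, $0$ or $d$ for $\mathbf{P}_1$, $d$ for $\mathbf{M}$, and $1$ for $\mathbf{D}$. This gives $\Prim_1\le\frac12$ in each case.

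For $\mathbf{A}$ the unary bound fails: every unary function is affine, so $\Prim_1=1$. I will use large arities instead. By (a), $\Prim_k(\A)\le\Prim_k(\mathbb{Z}_2^+)\le\frac12+2^{-(k+1)/2}$, using the coding-theory bound on the covering radius of the first-order Reed–Muller code from Example~\ref{ClonAfi}. Letting $k\to\infty$ then gives $\Prim(\A)\le\frac12$.

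The affine case is the main obstacle, since it is the only one where no fixed arity gives the bound. The key step is producing, for each $k$, a function at normalized distance close to $\frac12$ from every affine function. For even $k$, bent functions do this, for instance $x_1x_2\oplus x_3x_4\oplus\cdots\oplus x_{k-1}x_k$ with Walsh coefficients $\pm2^{k/2}$. So I would verify $\Prim_{2m}(\mathbb{Z}_2^+)=\frac12+2^{-m-1}$ via Parseval, and then restrict to even arities, which is enough because only the infimum matters.

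Finally, to show the bound is attained, compute $\Prim(\mathbb{Z}_2^+)=\frac12$. This needs the matching lower bound $\mu\ge\frac12$ for the affine clone, which holds because $f$ agrees with $t$ or with $t\oplus1$ on at least half the points. That remark is not needed for the equivalence itself.
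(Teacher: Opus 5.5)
Your proposal is correct and follows essentially the same route as the paper: the easy direction via $\Prim(\A)=1$ for primal algebras, then Post's five maximal clones, with unary target functions handling $\mathbf{P}_0,\mathbf{P}_1,\mathbf{M},\mathbf{D}$ and the nonlinearity (bent-function) bounds for $\mathbf{A}$ as $k\to\infty$. You differ only in presentation: you state the monotonicity step (a) explicitly, and for a proper subclone you claim only $\Prim(\A)\le\frac12$ rather than the paper's equality, which is slightly more careful.
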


This result admits a natural interpretation: if a two-element algebra is not primal, then there are functions that the algebra can approximate in no more than half (almost half) of the points. If we want the algebra to provide better approximations, one must require full primality. In other words, there is a structural gap between $1/2$ and $1$ with respect to primality.

Regarding this last result, it is natural to ask what happens for algebras of larger cardinality. We do not currently have a clear conjecture. Investigating a generalization would likely require an analysis of the maximal clones in Rosenberg’s classification; see \cite{Rosenberg1970, Szendrei2024, Lau2006}. One may also study bounds not for global primality, but for primalities restricted to a fixed arity $k$. We leave this direction for future work.

Finally, there are two spectra of very elementary structures that remain unknown. On the one hand, the computation of $\PSpec(\mathfrak{S}_3)$ remains open. Example~\ref{ExS3} suggests that this problem is non-trivial. On the other hand, we also do not know $\PSpec((\{0,1\}, \rightarrow))$, where $\rightarrow$ denotes material implication; see again Table~\ref{TaulaOrdre2}. Although the explicit computation of these spectra would probably not substantially extend the theory developed here, these cases remain open.






\end{document}